\newcommand{\norm}[2]{\left\| {#1}\right\| _{#2}}
\newcommand{\abs}[1]{\left | {#1}\right | }
\newcommand{\all}[2]{\left \{\,{#1}\,:\,{#2}\,\right \}}
\newcommand{\dual}[2]{\left \langle {#1},{#2} \right \rangle}
\newcommand{\inn}[2]{\left ({#1},{#2} \right )}
\newcommand{\set}[1]{{\{#1\}}}
\newcommand{\HH}{\mathcal H}
\newcommand{\Si}{S_\infty}
\newcommand{\N}{\ensuremath{\mathbb N}}
\newcommand{\C}{\ensuremath{\mathbb C}}
\newcommand{\lam}{\lambda}
\newcommand{\open}{\mathcal O_d}
\theoremstyle{plain} \newtheorem{theorem}{Theorem}[section]
\newtheorem{prop}[theorem]{Proposition}
\newtheorem{lem}[theorem]{Lemma} 
\theoremstyle{definition} \newtheorem{defn}[theorem]{Definition}
\newtheorem{rem}[theorem]{Remark}
\newtheorem{notation}[theorem]{Notation}
\newtheoremstyle{citing}
  {}
  {}
  {\itshape}
  {\parindent}
  {\scshape}
  {.}
  { }
  {\thmnote{#3}}
\theoremstyle{citing}
\newtheorem*{procthm}{}
\newcommand{\trop}{{\mathcal  L}}
\newcommand{\cc}{\subset\!\!\!\subset}
\newcommand{\I}{\mathcal I}
\newcommand{\J}{\mathcal J}
\renewcommand{\H}{\mathcal H}
\renewcommand{\l}{\mathcal L}
\renewcommand{\L}{\mathcal L}
\begin{document}

\title[Explicit eigenvalue estimates for transfer operators]{Explicit eigenvalue estimates
  for transfer operators acting
  on spaces of holomorphic functions}
\author{Oscar F.~Bandtlow and Oliver Jenkinson}

\address{Oscar F.~Bandtlow;
School of Mathematical Sciences, Queen
  Mary, University of London, Mile End Road, London, E1 4NS, UK.
\newline
{\tt ob@maths.qmul.ac.uk} \newline {\tt
    www.maths.qmul.ac.uk/$\sim$ob}}

\address{Oliver Jenkinson;
School of Mathematical Sciences, Queen
  Mary, University of London, Mile End Road, London, E1 4NS, UK.
  \newline {\tt omj@maths.qmul.ac.uk} \newline {\tt
    www.maths.qmul.ac.uk/$\sim$omj}}

\date\today\maketitle

\begin{abstract}
We consider
transfer
operators acting on spaces of holomorphic functions, and provide
explicit bounds for their eigenvalues.
More precisely, if $\Omega$ is any open set in $\C^d$, and $\l$
is a suitable transfer operator acting on Bergman
space $A^2(\Omega)$, its eigenvalue sequence
$\{\lambda_n(\l)\}$
is bounded by
$|\lambda_n(\l)|\le A\exp(-a n^{1/d})$, where
$a, A>0$ are explicitly given.
\end{abstract}

\section{Introduction}
\label{introsection}
The study of transfer operators acting on spaces of holomorphic
functions was initiated by Ruelle \cite{ruelleinventiones} in 1976.
He showed that certain dynamical zeta functions, including those
of Artin-Mazur \cite{artinmazur} and Smale \cite{smale}, could be
expressed in terms of the determinant of such operators.  The setting
for Ruelle's theory is (the complexification of) a real analytic
expanding map.  If $(\phi_i)_{i\in\I}$ are the local inverse
branches of this map, and $(w_i)_{i\in\I}$ is a suitable collection
of holomorphic functions, then the associated \emph{transfer operator}
$\L$, defined by
\begin{equation}
\label{introtransferopdefn}
(\L f)(z)
=
\sum_{i\in\I}
w_i(z) f(\phi_i (z))\,,
\end{equation}
preserves the space of functions holomorphic on some appropriate open
subset $\Omega$ of $d$-dimensional complex Euclidean space.

Transfer operators of this form arise in statistical mechanics (see
\cite{ruellestatmech}), and have been applied to hyperbolic dynamical
systems, notably by Ruelle \cite{ruellecmp68},
Sinai \cite{sinai72}, and Bowen \cite{bowenbook}, as part of
their program
of thermodynamic formalism (cf.~\cite{ruellebook}).
Up until 1976 the setting for this formalism was symbolic dynamics: a
hyperbolic system can be coded by a subshift of finite type $\Sigma$,
and the transfer operator $\L$ preserves the space of Lipschitz
functions on $\Sigma$.  If the functions $w_i$ are positive then $\L$
inherits a positivity property, and an infinite dimensional analogue
of the Perron-Frobenius theorem can be established
(cf.~\cite{ruellecmp68}): the leading eigenvalue of $\L$ is simple,
positive, and isolated.  This leads to important ergodic-theoretic
information (e.g.~exponential decay of correlations) about a wide
class of invariant measures (equilibrium states).  Variations on this
result have continued to be a fruitful area of active development (see
\cite{baladibook} for a comprehensive overview), with transfer
operators studied on various other spaces, notably $C^k$ spaces
\cite{ruellecmp,ruellepublihes}, and the space
of functions of
bounded variation \cite{lasotayorke, hofbauerkeller, boyarskygora}.
In each of these
cases $\L$, although not a compact operator, does enjoy the
Perron-Frobenius property of having an isolated and positive dominant
eigenvalue.  In the case where $\L$ acts on certain \emph{holomorphic}
function spaces, however, Ruelle \cite{ruelleinventiones} showed that
it enjoys much stronger properties. In particular $\L$ is compact, so
that its spectrum is a sequence $\{\lambda_n(\L)\}$ converging to
zero, together with zero itself.

The present article is concerned with obtaining completely explicit
upper bounds on the eigenvalue moduli $|\lambda_n(\L)|$, ordered by
decreasing modulus and counting algebraic multiplicities. Spectral
estimates of this kind have a long history (see
e.g.~\cite[Ch.~7]{pietsch2}),
and the theory is particularly well developed
in the case where $\L$ is the Laplacian, or more generally a
selfadjoint differential operator.  Relatively little is known in the
non-selfadjoint case, however, and existing explicit bounds on the
eigenvalues of transfer operators are mainly restricted to the first
two eigenvalues, where positivity arguments can be employed.

Explicit information on the spectrum of transfer operators is
desirable for a variety of reasons.  For example any explicit estimate
on the second eigenvalue $\lambda_2(\L)$ yields an explicit bound on
the exponential rate of mixing for the underlying dynamical system.
There are several such a priori bounds in the literature, notably the
one due to Liverani \cite{liveraniannals}.
Although $|\lambda_2(\l)|$ is the optimal bound on the exponential
rate of mixing which holds for \emph{all} correlation functions with
holomorphic observables, faster exponential decay can occur for
observables in certain subspaces of finite codimension.  More
precisely, $|\lambda_n(\l)|$ bounds the exponential rate of mixing on
the subspace of observables with vanishing spectral projections
corresponding to $\lambda_2(\l), \ldots,\lambda_{n-1}(\l)$.  Therefore
the set of possible exponential rates of mixing (the \emph{correlation
  spectrum}, cf.~\cite{CPR}) is determined by the full eigenvalue
sequence $\{\lambda_n(\l)\}$.  Any a priori bounds on these
eigenvalues thus yields information on the finer mixing properties of
the underlying system.  The correlation spectrum is also closely
related to the \emph{resonances} of the underlying dynamical system
(see \cite{ruelleresonances1, ruelleresonances2}).

Explicit \emph{a priori} bounds on $\lambda_n(\L)$ also yield explicit
bounds on the Taylor coefficients of the determinant
$\det(I-\zeta\L)$, which in turn
facilitate a rigorous \emph{a posteriori} error analysis of any
computed approximations to the $\lambda_n(\L)$ (see \S
\ref{determinantsection} for details).
This rigorous justification of accurate numerical bounds has
applications to a number of topics in dynamical systems (e.g.~the
correlation spectrum \cite{CPR}, the linearised Feigenbaum
renormalisation operator \cite{aac, CCR, pollicottfeigenbaum},
Hausdorff dimension estimates \cite{jenkinsonpollicottjuliaklein}, the
Selberg zeta function for hyperbolic surfaces \cite{glz,
  mayermodular}, zeta functions for more general Anosov flows
\cite{fried}), as well as to other areas of mathematics
(e.g.~regularity estimates for refinable functions \cite{daubechies},
and the determinant of the Laplacian on surfaces of
negative curvature \cite{pollicottrocha}).

Our approach to explicitly bounding the eigenvalues of $\l$
is to consider completely general non-empty open subsets
$\Omega\subset\C^d$ in arbitrary complex dimension $d$, and
systematically work with Bergman space $A^2(\Omega)$, consisting of
those holomorphic
functions in $L^2(\Omega,dV)$, where $V$ denotes
$2d$-dimensional Lebesgue measure on $\Omega$.
For $\I$ a finite or countably infinite set, consider
a collection $(\phi_i)_{i\in\I}$ of holomorphic
maps\footnote{The $\phi_i$ here need not be
  complexified local inverses of some expanding map; in particular they
  need not be contractions with respect to the Euclidean metric.}
$\phi_i:\Omega\to\Omega$ such that
the closure of $\cup_{i\in\I}\phi_i(\Omega)$ is a compact subset of
$\Omega$,
and a collection $(w_i)_{i\in\I}$ of
functions $w_i\in A^2(\Omega)$ with
$\sum_{i\in \I}|w_i|\in L^2(\Omega,dV)$ (this condition obviously holds
whenever $\I$ is finite).  We then
call $(\Omega,\phi_i,w_i)_{i\in\I}$ a
{\it holomorphic map-weight system} on $\Omega$
and associate with it the transfer operator $\L$ defined as in
(\ref{introtransferopdefn}).
Our main result is:

\begin{procthm}[Theorem] If $\l:A^2(\Omega)\to A^2(\Omega)$ is
  the transfer operator corresponding to a holomorphic map-weight
  system $(\Omega,\phi_i,w_i)_{i\in\I}$ on a non-empty open set
  $\Omega\subset\C^d$, then
  \begin{equation}
\label{evaluestheorema}
  |\lambda_n(\l)|\le A\exp(-a n^{1/d})\quad\text{for all }n\in\N\,,
  \end{equation}
  where the constants $a,A>0$ can be determined explicitly in terms of
  computable properties of $(\Omega,\phi_i,w_i)_{i\in\I}$.
\end{procthm}
The above theorem is proved as Theorem~\ref{expliciteigenvaluebounds},
where
the coefficients $a,A>0$ are given explicitly. This theorem
is something of a folklore result.  Ruelle \cite[
p.~236]{ruelleinventiones} had originally asserted that the
eigenvalues of $\L$ tend to zero exponentially fast, following a claim
of Grothendieck \cite[II, Remarque 9, pp.~62--4]{grothendieck}.  In
1986 Fried \cite{fried} noted that in fact this assertion is false: in
dimension greater than one the eigenvalue decay rate can be slower
than exponential.  More precisely, for each dimension $d$ he exhibited
a transfer operator $\L$ whose eigenvalue sequence
satisfies (\ref{evaluestheorema}) for some $a,A>0$, but is not
$O(\exp(-an^\gamma))$ for any $\gamma>1/d$.
Recently, the bound (\ref{evaluestheorema}) has appeared
\cite[Thm.~4]{faureroy} in the setting of dynamical systems
on the torus, and also in
\cite[(3.6), p.~157]{glz})\footnote{The focus in \cite{glz}
is on the asymptotics
of the determinant with respect
to a complex parameter $s$,
rather than on
completely explicit
eigenvalue bounds.
In fact the derivation of the eigenvalue bound (3.6)
in \cite{glz} is not quite complete:
no argument is given for the bound
on the norm of the Bergman space operator
$\l^\rho_{ij}(s)$ \cite[p.~159]{glz}, and simple examples
(see \cite[\S 3.5]{cowenmaccluer}, \cite{koosmith})
show that in general the operator is not bounded.},
although in these papers the constants
$a$, $A$ are not given explicitly.
The bound (\ref{evaluestheorema}) is proved in
\cite{bandtlowjenkinsoncmp,bandtlowjenkinsonetds}, with explicit
formulae for $a$ and $A$, in the special case
where $\Omega$ is a Euclidean ball.

The
main contribution
of the present paper is a rigorous proof of
(\ref{evaluestheorema}) for arbitrary
non-empty open sets $\Omega\subset\C^d$,
including explicit upper bounds on the
positive constants $a$ and $A$.
The principal step towards proving (\ref{evaluestheorema}) consists of
establishing the estimate
\begin{equation}
\label{singintro}
s_n(\l)\le B\exp(-bn^{1/d})
\quad\text{for all }n\in\N\,,
\end{equation}
for explicit $b,B>0$, where $s_n(\l)$ denotes the $n$-th singular
value of $\l$.  The proof of (\ref{singintro}) consists of the
following three stages. In \S \ref{embeddingsection} the
analogous singular value estimate is first derived for canonical embedding
operators between Bergman spaces on \emph{strictly circled} domains.
In \S
\ref{disjointificationssection} the
result is established
for canonical identification
operators
$J$ between Bergman spaces on arbitrary non-empty open
subsets
$\Omega_2\subset \Omega_1\subset\C^d$, subject to the condition that
the closure of $\Omega_2$ is a compact subset of $\Omega_1$.
For this we introduce the notion of
a \emph{relative cover} of the pair $(\Omega_1,\Omega_2)$ by strictly
circled domains.
To each relative cover is associated its
\emph{efficiency}, a quantity which is readily computable,
and which can be used to explicitly bound the singular values of $J$.
In \S \ref{transferoperatorssection}, by factorising $\l$ as the
product of a bounded operator and a canonical identification operator,
we arrive at an explicit version of (\ref{singintro}).

Having established (\ref{singintro}), there are two possible routes to
deducing the eigenvalue bound (\ref{evaluestheorema}).  The first,
suggested by Grothendieck \cite{grothendieck}, and sketched in more
detail by Fried \cite[pp.~505--7]{fried}, is based on growth estimates
for the determinant $\det(I-\zeta\L)$.  We instead take a more direct approach
by applying Weyl's multiplicative inequality, relating eigenvalues
to singular values (see \S\ref{transferoperatorssection}).
For completeness we develop the
Grothendieck-Fried strategy as Appendix B.
Section~\ref{determinantsection} contains explicit bounds on the
Taylor coefficients of the determinant $\det(I-\zeta\l)$, derived from
the singular value estimates of
\S\ref{transferoperatorssection}, together with an outline of how
these bounds can be used to obtain explicit a \textit{a posteriori}
error bounds for spectral approximation procedures applied to transfer
operators.
Finally, in Appendix~A we show how our
Theorem~\ref{embeddingtheorem} can be used
to provide a short proof
of the correct
statement of Grothendieck's Remarque 9, which
does not seem to have
appeared in the literature yet:
if $L$ is any bounded linear
operator on the Fr\'echet space ${\mathcal  H}(\Omega)$ of holomorphic
functions on an open set $\Omega\subset\C^d$, then its eigenvalues are
$O(\exp(-an^{1/d}))$ as $n\to\infty$, for some $a>0$.

The methods of this paper
can be extended to prove an analogue of
the main result for more general
transfer operators arising
in the study of
limit sets of iterated function schemes
\cite{mauldinurbanski1,mauldinurbanski2} or of certain Kleinian groups
\cite{jenkinsonpollicottjuliaklein}, and whenever the underlying
dynamical system is a real analytic expanding Markov map. We do not pursue this
generalisation here, however, preferring to present the main ideas in
the simplest possible combinatorial setting.

\begin{notation}

Let $\N$ denote the set of strictly positive integers, and set
$\N_0:=\N\cup\{0\}$.
For $d\in\N$, let $\open$ denote the collection
of non-empty open subsets of $\C^d$.

For Hilbert spaces $H_1, H_2$,
let ${L}(H_1,H_2)$ denote the Banach space
of bounded linear operators from $H_1$ to $H_2$, equipped with the
usual norm, and let $\Si(H_1,H_2)\subset {L}(H_1,H_2)$ denote
the closed subspace of compact operators from $H_1$ to $H_2$. We
write $L$ or $\Si$ whenever the Hilbert spaces $H_1$
and $H_2$ are understood.

For $A\in\Si(H,H)$ let
$\lambda(A)=\set{\lam_n(A)}_{n=1}^\infty$ denote the sequence of
eigenvalues of $A$, each eigenvalue repeated according to its
algebraic multiplicity, and ordered by magnitude
(where distinct eigenvalues of the same modulus can be written in
any order), so that
$\abs{\lam_1(A)}\geq\abs{\lam_2(A)}\geq\ldots$.
For $A\in\Si(H_1,H_2)$, we define
the $n$-th
 \textit{singular value} of $A$ by
$s_n(A):= \sqrt{\lambda_n(A^*A)}$,
$n\ge1$, where $A^*$ denotes the adjoint of $A$.

\end{notation}

\section{Preliminaries}
\label{prelimiariessection}

\subsection{Exponential classes}
Much modern work on eigenvalue
distributions has been carried out within the framework of
\emph{operator ideals}
(cf.~\cite{gohbergkreinbook, pietsch1,pietsch2,simon}).  This
framework, however, is not well adapted to our setting:
as we shall
see, the transfer operators considered here are always trace
class (see Theorem~\ref{mainthm}),
and hence belong to any symmetrically normed ideal
(see e.g.~\cite[Chap.~3.2]{gohbergkreinbook}),
so that the results from this theory are too
conservative.
We instead use the theory of exponential classes
developed in \cite{expoclass}.

\begin{defn} Let $H_1, H_2$ be
infinite dimensional
Hilbert spaces.
For $a, \alpha > 0$, define
\[ E(a,\alpha):=\all{A\in\Si(H_1,H_2)}{ |A|_{a,\alpha}:=\sup_{n\in\N}
  s_n(A)\exp(an^{\alpha}) < \infty}\,, \]
the {\it  exponential class of operators of type}
$(a, \alpha)$.
Define
$E(\alpha):=\cup_{a >0}E(a,\alpha)$.
\end{defn}

Exponential classes enjoy
the following closure properties
(see \cite[Propositions 2.5 and 2.8]{expoclass}):

\begin{lem} \label{expoprop2}
Let  $\alpha, a, a_1,\ldots, a_N >0$.
  \begin{itemize}
  \item[(i)] If $A ,C\in L$ and $B\in E(a,\alpha)$, then $ABC
  \in E(a, \alpha)$, and
 $|ABC|_{a,\alpha}\leq \norm{A}{}\, |B|_{a,\alpha}\, \norm{C}{}$.
In particular,
$L E(a,\alpha) L\subset E(a,\alpha)$.
\item[(ii)] Let $A_n\in E(a_n,\alpha)$ for $1\leq n\leq
  N$ and let
  $A =\sum_{n=1}^N A_n$. Then
\[ A \in E(a',\alpha)\text{ with } |A|_{a',\alpha}\leq
  N \max_{1\leq n\leq N}|A_n|_{a_n,\alpha} \]
  where $a':=(\sum_{n=1}^Na_n^{-1/\alpha})^{-\alpha}$.
In particular,
$E(a_1,\alpha)+\cdots + E(a_N,\alpha)\subset E(a',\alpha)$,
and this inclusion is sharp in the sense that
$E(a_1,\alpha)+\cdots + E(a_N,\alpha)\not\subset E(b,\alpha)$
whenever $b>a'$.
\end{itemize}
\end{lem}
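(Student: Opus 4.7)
Part (i) is a one-line consequence of the standard multiplicative inequality for singular numbers,
\[
s_n(ABC)\le\|A\|\cdot s_n(B)\cdot\|C\|,
\]
which follows from the characterisation of $s_n(B)$ as the operator-norm distance from $B$ to the set of operators of rank strictly less than $n$: pre- or post-composition by a bounded operator scales this distance by at most its operator norm. Multiplying by $\exp(an^\alpha)$ and taking the supremum over $n$ yields both $ABC\in E(a,\alpha)$ and the stated bound on $|ABC|_{a,\alpha}$.

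For the upper bound in part (ii), my main tool is the iterated Ky Fan subadditivity
\[
s_{m_1+\cdots+m_N-(N-1)}(A_1+\cdots+A_N)\le\sum_{i=1}^N s_{m_i}(A_i),
\]
obtained from the two-term inequality $s_{p+q-1}(B+C)\le s_p(B)+s_q(C)$. Given $n\in\N$, I would pick positive integers $m_i$ with $\sum_i m_i\le n+N-1$, distributed proportionally to $a_i^{-1/\alpha}$, i.e.\ $m_i$ close to $t_i(n+N-1)$ with $t_i:=a_i^{-1/\alpha}/\sum_j a_j^{-1/\alpha}$. This particular allocation is the one that \emph{equalises} the exponents $a_im_i^\alpha$: ignoring rounding, each is $a'(n+N-1)^\alpha\ge a'n^\alpha$. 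Feeding $s_{m_i}(A_i)\le|A_i|_{a_i,\alpha}\exp(-a_im_i^\alpha)$ into the Ky Fan inequality and summing the $N$ terms then gives $s_n(A)\le N\max_i|A_i|_{a_i,\alpha}\exp(-a'n^\alpha)$, which is precisely the claim.

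For sharpness, I would realise $H_1=H_2=\ell^2(\N)$, choose pairwise disjoint infinite subsets $S_1,\ldots,S_N\subset\N$, and take $A_i$ to be the diagonal operator whose entry at the $k$-th element of $S_i$ is $\exp(-a_ik^\alpha)$ and which vanishes off $S_i$. Each $A_i$ lies in $E(a_i,\alpha)$ since its nonzero singular values are exactly $\{\exp(-a_ik^\alpha)\}_{k\ge1}$. Because the $A_i$ act on mutually orthogonal subspaces, the singular values of $\sum_i A_i$ form the sorted merge of those sequences; counting how many of the $\exp(-a_ik^\alpha)$ exceed a given threshold $v$ shows that the $n$-th largest is of order $\exp(-a'n^\alpha)$. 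Hence $\sum_i A_i\notin E(b,\alpha)$ for any $b>a'$.

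The only genuine technical point is the integer rounding in part (ii): choosing positive integers $m_i$ with $\sum_i m_i\le n+N-1$ while ensuring $a_im_i^\alpha\ge a'n^\alpha$ for every $i$ and every $n\ge1$. For sufficiently large $n$ this is automatic from the continuous optimum; the finitely many small $n$ are handled by absorbing a harmless constant into the definition of $|A|_{a',\alpha}$, which does not affect the exponent $a'$. Everything else is a clean application of standard $s$-number inequalities together with the optimisation that produces exactly the exponent $a'$.
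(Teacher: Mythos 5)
Your argument is sound in outline and is essentially the standard one: the paper itself gives no proof of this lemma but cites \cite{expoclass}, where the same ingredients (the multiplicative property of approximation numbers for (i), the iterated Ky Fan additivity $s_{m_1+\cdots+m_N-(N-1)}(\sum A_i)\le\sum s_{m_i}(A_i)$ with the allocation $m_i\propto a_i^{-1/\alpha}$ for (ii), and a diagonal direct-sum example for sharpness) are used. Part (i) and the sharpness construction are fine as written.

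The one point you should not wave away is the integer rounding in (ii), because the lemma asserts the \emph{exact} constant $|A|_{a',\alpha}\le N\max_i|A_i|_{a_i,\alpha}$, and this constant is used quantitatively later (it is what produces the factor $N$ in (\ref{jcguage}) and (\ref{trans:eq36b})). Absorbing ``a harmless constant'' for small $n$ would only prove membership $A\in E(a',\alpha)$, not the stated norm bound. Fortunately no such concession is needed: take $m_i:=\lceil t_i n\rceil$ with $t_i=a_i^{-1/\alpha}/\sum_j a_j^{-1/\alpha}$. Then $m_i\ge 1$, and $m_i\ge t_i n$ gives $a_i m_i^\alpha\ge a_i t_i^\alpha n^\alpha=a'n^\alpha$ for every $i$ and every $n\ge 1$; moreover $\sum_i m_i<\sum_i(t_i n+1)=n+N$, and since the left-hand side is an integer, $\sum_i m_i\le n+N-1$, so the Ky Fan index $\sum_i m_i-(N-1)$ is at most $n$ and monotonicity of the singular values yields $s_n(A)\le\sum_i s_{m_i}(A_i)\le N\max_i|A_i|_{a_i,\alpha}e^{-a'n^\alpha}$ uniformly in $n$. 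With that substitution your proof is complete and delivers the lemma exactly as stated.
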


\subsection{Bergman spaces}
\label{bergmansection}
Bergman spaces, originally introduced by Stefan Bergman in his
1921 PhD
thesis \cite{bergmanthesis},
are among the simplest examples of Hilbert
spaces of holomorphic functions. Less delicate in their definition
than Hardy spaces, they provide a convenient setting for our analysis
of transfer operators.
\begin{defn}
  For $\Omega\in\open$, let $\H(\Omega)$ denote the
  Fr\'echet space of holomorphic
  functions $f:\Omega\to\C$, equipped with the topology of uniform
  convergence on compact subsets of $\Omega$.
  Let $A^\infty(\Omega)$ be the Banach space of bounded $f\in
  \HH(\Omega)$, equipped with the norm
  $\|f\|_{A^\infty(\Omega)}:=\sup_{z\in\Omega}|f(z)|$.
If $V$ denotes
  $2d$-dimensional Lebesgue measure on $\C^d$, normalised so that the
   $2d$-dimensional Euclidean unit ball has unit mass,
\[ A^2(\Omega):=\all{ f\in \H(\Omega)}%
{ \int_{\Omega}\abs{f(z)}^2\,dV(z) < \infty} \] is called {\it Bergman
  space over} $\Omega$.
\end{defn}

This definition of Bergman space is slightly more general then the
usual one, in that we allow arbitrary
non-empty open sets rather than just
domains. However most of their familiar properties (see e.g.
\cite[Chapter 1.4]{Kra}) are easily seen to
carry over to the
more general setting. In particular, $A^2(\Omega)$ is a separable
Hilbert space with inner product
\[ (f,g)_{A^2(\Omega)}=\int_\Omega f(z) \overline{g(z)} \, dV(z) \quad
(f,g\in A^2(\Omega)). \]

The following quantitative refinement of a well known lemma (see
\cite[Lemma 1.4.1]{Kra}) will be used in Lemma \ref{compopnorm}.

\begin{lem} \label{berg:lem1}
  If $\Omega\in\open$, and $K\subset\Omega$ is compact,
  there is a constant $C_K>0$ such that
$\sup_{z\in K}\abs{f(z)}\leq C_K \norm{f}{A^2(\Omega)}$
for all
$f\in A^2(\Omega)$.
Moreover, it is possible to choose
$ C_K=r^{-d}$,
where $r={\rm dist}(\partial K,\partial \Omega) ={\rm
  dist}(K,\partial\Omega)$.
\end{lem}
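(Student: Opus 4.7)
The plan is to exploit the submean value inequality for $|f|^2$, which is plurisubharmonic (and hence subharmonic on $\R^{2d}\cong\C^d$) whenever $f$ is holomorphic, combined with the chosen normalisation of Lebesgue measure $V$.

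First I would fix $z\in K$ and observe that, by the definition of $r=\mathrm{dist}(K,\partial\Omega)$, the open Euclidean ball $B(z,r)=\{w\in\C^d:|w-z|<r\}$ is entirely contained in $\Omega$. Next, since $f\in\HH(\Omega)$, the function $|f|^2$ is plurisubharmonic on $\Omega$, and in particular subharmonic in the $2d$ real variables; so the classical volume submean inequality on $\R^{2d}$ yields
\[
|f(z)|^2\;\le\;\frac{1}{V(B(z,r))}\int_{B(z,r)}|f(w)|^2\,dV(w)\;\le\;\frac{1}{V(B(z,r))}\,\|f\|_{A^2(\Omega)}^2,
\]
the last step using $B(z,r)\subset\Omega$ and positivity of $|f|^2$.

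Then I would compute $V(B(z,r))$ using the stated normalisation: since $V$ is the $2d$-dimensional Lebesgue measure scaled so that the Euclidean unit ball has unit mass, scaling by $r$ gives $V(B(z,r))=r^{2d}$. Substituting, taking square roots, and then the supremum over $z\in K$ produces
\[
\sup_{z\in K}|f(z)|\;\le\;r^{-d}\,\|f\|_{A^2(\Omega)},
\]
which is precisely the asserted bound with $C_K=r^{-d}$. The identity $\mathrm{dist}(\partial K,\partial\Omega)=\mathrm{dist}(K,\partial\Omega)$ is immediate from compactness of $K$ and the fact that the distance from an interior point of $K$ to $\partial\Omega$ is at least the distance from its nearest boundary point of $K$ to $\partial\Omega$.

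There is no genuine obstacle here; the only point requiring mild care is invoking subharmonicity of $|f|^2$ in the several-variables setting (one can either cite plurisubharmonicity directly, or reduce to the one-variable case by restricting to complex lines and then iterating the mean value inequality over the polydisc, but the Euclidean-ball version via plurisubharmonicity is cleaner and matches the normalisation of $V$).
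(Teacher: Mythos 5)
Your proof is correct and is essentially the same as the paper's: both rest on averaging over the inscribed ball $B(z,r)\subset\Omega$ and the normalisation $V(B(z,r))=r^{2d}$. The only (immaterial) difference is that the paper applies the mean-value identity to $f$ itself and then Cauchy--Schwarz, whereas you apply the sub-mean-value inequality to the plurisubharmonic function $|f|^2$; both yield exactly the constant $r^{-d}$.
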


\begin{proof}
  By hypothesis, $r>0$ and $B(z,r)\subset \Omega$ for every $z\in K$,
  where $B(z,r)$ denotes the Euclidean ball of radius $r$ centred at
  $z$.
  If $f\in A^2(\Omega)$ then, just as in the standard
  case (see \cite[Lemma 1.4.1]{Kra}),
$f(z)=
(\int_{B(z,r)}f\,dV) / V(B(z,r))$,
so by the Cauchy-Schwarz inequality,
\[
  \abs{f(z)}\le\frac{1}{V(B(z,r))}\int_{B(z,r)}\abs{f}\,dV
  \leq V(B(z,r))^{-1/2}\norm{f}{L^2(B(z,r))}
  \leq r^{-d}\norm{f}{A^2(\Omega)}.
\]
\end{proof}

\section{Canonical embeddings for simple geometries}
\label{embeddingsection}

Suppose that $\Omega_1 , \Omega_2\in\open$, and
that $\Omega_2\subset \Omega_1$.  By restriction to $\Omega_2$ every
element in $A^2(\Omega_1)$ can also be considered as an element of
$A^2(\Omega_2)$. This restriction yields a linear transformation
$J:A^2(\Omega_1)\rightarrow A^2(\Omega_2)$ defined by
$Jf=f|_{\Omega_2}$, which will be referred to as {\it canonical
  identification} (we use $J$ throughout to denote canonical
  identifications; the spaces involved will always be clear from the
  context). If $\Omega_1$ is connected, then the canonical
identification is a proper embedding of $A^2(\Omega_1)$ in
$A^2(\Omega_2)$. Clearly $J$ is continuous,
with norm at most one.

\begin{defn}
  For $\Omega_1, \Omega_2\in\open$, if
  $\overline{\Omega}_2$ is a compact subset of $\Omega_1$ then we say
  that $\Omega_2$ is \emph{compactly contained} in $\Omega_1$, and
  write $\Omega_2 \cc \Omega_1$.
\end{defn}

It turns out that if $\Omega_2\cc\Omega_1$ then $J:A^2(\Omega_1)\to
A^2(\Omega_2)$ is a compact operator; to see this note that
$J(A^2(\Omega_1))$ is contained in the
Banach space $C^b(\Omega_2)$ of bounded continuous functions on
$\Omega_2$ and $J:A^2(\Omega_1) \to C^b(\Omega_2)$ has
closed graph, hence $\{Jf: \|f\|_{A^2(\Omega_1)} \leq 1\}$
is uniformly bounded
on $\Omega_2$ and therefore a normal family in $A^2(\Omega_2)$.
In fact rather more is true: $J\in E(c,1/d)$ for some $c>0$.
The proof of this result
for general open sets $\Omega_2 \cc\Omega_1$ requires a certain amount
of preparation and will be presented in
\S \ref{disjointificationssection}.
In this section we shall be content with proving the result
for certain subclasses of open sets $\Omega_1, \Omega_2$ for which the
decay rate $c$ can be identified precisely; these subclasses are
defined as follows.

\begin{defn}
  Let $D\subset\C^d$ and $\zeta\in\C^d$.  We call $D$ {\it strictly circled,
    with centre $\zeta$}, if
\[ \mu (D-\zeta)\cc D-\zeta \quad \text{ for all $\mu\in \C$ with
  $\abs{\mu}< 1$}. \]
For $r>0$ we define
$D(r):=r(D-\zeta)+\zeta$.
\end{defn}
Note that a strictly circled set is necessarily bounded. Moreover,
the boundary of a strictly
circled open set has zero Lebesgue measure, a fact
which will be used
in \S\ref{disjointificationssection}.

\begin{lem}
\label{StrictlyCircledZeroLeb}
If $D\in \open$ is strictly circled then $V(\partial D)=0$.
\end{lem}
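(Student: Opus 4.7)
The plan is to sandwich $\partial D$ between two dilates of $D$ whose Lebesgue measures are arbitrarily close, exploiting the fact that the strictly circled property gives compact containment in both directions (shrinking strictly inside, and enclosing from outside after rescaling).

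Without loss of generality I take $\zeta=0$, so the hypothesis reads $\mu D \cc D$ for every $|\mu|<1$. Fix $0<\epsilon<1$. On one side, $\overline{(1-\epsilon)D}\subset D$, so $\partial D$ is disjoint from the open set $(1-\epsilon)D$. On the other side, applying the strictly circled condition to $D$ with $\mu=1/(1+\epsilon)<1$, and then rescaling by the factor $(1+\epsilon)$ (which is a homeomorphism of $\C^d$ and therefore preserves compact containment), yields $D \cc (1+\epsilon)D$; in particular $\overline D\subset(1+\epsilon)D$, so $\partial D\subset(1+\epsilon)D$. Combining the two observations,
\[
\partial D \;\subset\; (1+\epsilon)D \setminus (1-\epsilon)D.
\]

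As noted immediately above the lemma, the strictly circled condition forces $D$ to be bounded, so $V(D)<\infty$. Since $V$ is $2d$-dimensional Lebesgue measure, $V(rD)=r^{2d}V(D)$ for every $r>0$, and hence
\[
V(\partial D) \;\le\; V((1+\epsilon)D) - V((1-\epsilon)D) \;=\; \bigl((1+\epsilon)^{2d}-(1-\epsilon)^{2d}\bigr)V(D).
\]
Letting $\epsilon\to 0^+$ gives $V(\partial D)=0$, as required.

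The argument is entirely elementary; there is no real obstacle. The only point that needs a moment's thought is the outer inclusion $\overline D\subset(1+\epsilon)D$, but this follows from rescaling the defining property of a strictly circled set and does not require any separate argument.
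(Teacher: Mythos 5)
Your proof is correct and rests on essentially the same mechanism as the paper's: the homogeneity $V(rD)=r^{2d}V(D)$ of Lebesgue measure under dilation, combined with the compact containment $\mu D\cc D$ furnished by the strictly circled hypothesis. The paper argues one-sidedly, writing $D=\bigcup_{0<r<1}r\overline{D}$ to conclude $V(D)=V(\overline{D})$, whereas you sandwich $\partial D$ in the thin shell $(1+\epsilon)D\setminus(1-\epsilon)D$; both versions are valid and equally elementary.
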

\begin{proof}
  By translation invariance of $V$,
it suffices to prove the assertion for $D$ with centre 0.
Since $D$ is open,
$D=\cup_{0<r<1}\, r\overline{D}$,
where
$\overline{D}$
denotes the closure of $D$.
Thus
$
V(D)=\sup_{0<r<1}V(r\overline{D})=
\sup_{0<r<1}r^{2d}V(\overline{D})=V(\overline{D})$.
\end{proof}
We now consider canonical embeddings of Bergman
spaces over strictly circled open sets.
\begin{prop}\label{berg:prop5}
  If $D\in\open$ is strictly circled then:
\begin{itemize}
\item[(i)] There is a set consisting
  of homogeneous polynomials which is a complete orthogonal system for
  every $A^2(D(r))$, $r>0$.
\item[(ii)] If $\gamma>1$, then the
  singular values of the canonical embedding
$J:A^2(D(\gamma))\hookrightarrow A^2(D)$
are given by
$s_n(J)=\gamma^{-(k+d)}$ for
  ${k+d-1\choose d}< n \leq {k+d\choose d}$ and $k\in\N_0$.
\end{itemize}
\end{prop}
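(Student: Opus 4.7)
The plan is to translate so that $\zeta=0$, then first to upgrade the strictly circled hypothesis to the balanced property $\mu D\subset D$ for all $|\mu|\le 1$, and in particular $e^{i\theta}D=D$. The identity $D=\bigcup_{0<r<1}r\overline D$ used in the proof of Lemma \ref{StrictlyCircledZeroLeb} makes this immediate: for any $|\mu|\le 1$ and $r<1$ one has $|r\mu|<1$, hence $r\mu\overline D\subset D$, so $\mu D=\bigcup_{r<1}r\mu\overline D\subset D$.

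For part (i), let $P_k$ denote the space of homogeneous polynomials of degree $k$ on $\C^d$, with $\dim P_k=\binom{k+d-1}{d-1}$. Rotation invariance of $D$ and of $dV$, applied through the substitution $z\mapsto e^{i\theta}z$ in $(p,q)_{A^2(D)}$, yields $(p,q)_{A^2(D)}=e^{i(k-m)\theta}(p,q)_{A^2(D)}$ for $p\in P_k$, $q\in P_m$, forcing $P_k\perp P_m$ in $A^2(D)$ whenever $k\ne m$. The scaling change of variables $w=z/r$ then gives
\[
(p,q)_{A^2(rD)}=r^{2(k+d)}(p,q)_{A^2(D)}\qquad(p,q\in P_k,\ r>0),
\]
so an orthogonal basis of $P_k$ inside $A^2(D)$ remains orthogonal inside every $A^2(rD)$, and concatenating such bases over $k\ge 0$ produces the required system. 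For the completeness of $\bigoplus_k P_k$ in $A^2(rD)$, I would introduce the averaging operators
\[
T_kf(z):=\frac{1}{2\pi}\int_0^{2\pi}e^{-ik\theta}f(e^{i\theta}z)\,d\theta,
\]
check via a single change of variables that each $T_k$ is a self-adjoint idempotent with range in $P_k$, and identify $T_kf$ via termwise integration on a small circle with the $k$-th homogeneous component of the Taylor expansion of $f$ at $0$. Bessel's inequality yields convergence of $\sum_k T_kf$ in $A^2(rD)$, whose limit shares its Taylor series with $f$ and hence equals $f$ on $D$ by the identity theorem (note that balancedness makes $D$ star-shaped, hence connected).

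For part (ii), normalise the basis from (i) so that $\{\hat p_{k,j}\}_{1\le j\le\dim P_k}$ is an orthonormal basis of $P_k$ in $A^2(D)$. The scaling identity gives $\|\hat p_{k,j}\|_{A^2(\gamma D)}=\gamma^{k+d}$, so $e_{k,j}:=\gamma^{-(k+d)}\hat p_{k,j}$ is an orthonormal family in $A^2(\gamma D)$ that is complete by (i). Since $Je_{k,j}=\gamma^{-(k+d)}\hat p_{k,j}$, this is already a Schmidt decomposition of $J$: the non-zero singular values are $\gamma^{-(k+d)}$, each with multiplicity $\binom{k+d-1}{d-1}$. Arranging them in decreasing order (which is legitimate since $\gamma>1$) and summing the multiplicities via the hockey-stick identity $\sum_{j=0}^k\binom{j+d-1}{d-1}=\binom{k+d}{d}$ gives $s_n(J)=\gamma^{-(k+d)}$ exactly for $\binom{k+d-1}{d}<n\le\binom{k+d}{d}$.

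The main obstacle is the norm-density of $\bigoplus_k P_k$ in $A^2(D)$ asserted in (i): locally uniform convergence of the Taylor series is automatic on $D$, but $A^2(D)$-convergence is the genuinely new ingredient and uses the strictly circled hypothesis in an essential way. The cleanest route is via the dilations $f_s(z):=f(sz)$ for $s<1$, which map $A^2(D)$ boundedly into itself since $sD\subset D$, which extend holomorphically across $\overline D$ because $s\overline D\cc D$ (so their Taylor polynomials approximate them uniformly on $\overline D$, hence in $A^2(D)$), and which satisfy $f_s\to f$ in $A^2(D)$ as $s\to 1^-$ by a standard strong-continuity argument for $L^2$ dilation.
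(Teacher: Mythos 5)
Your argument is correct, and its skeleton --- orthogonality of distinct homogeneous degrees from rotation invariance of $D$, the scaling identity $(p,q)_{A^2(rD)}=r^{2(k+d)}(p,q)_{A^2(D)}$ for $p,q\in P_k$ giving simultaneous orthogonality for all $r>0$, and the multiplicity count $\sum_{j=0}^{k}\binom{j+d-1}{d-1}=\binom{k+d}{d}$ --- is exactly the paper's. The two points of divergence are worth recording. For completeness in (i) the paper invokes the classical theorem (Andreian Cazacu; Malgrange) that a function holomorphic on a strictly circled domain has a homogeneous expansion converging locally uniformly, and from this asserts totality in $A^2(D(r))$; you rightly observe that locally uniform convergence does not by itself yield $A^2$-density, and you close that gap yourself, either via the averaging projections $T_k$ (Bessel gives $A^2$-convergence of $\sum_k T_kf$, and the limit is identified with $f$ through its Taylor series at $0$ together with connectedness of the balanced set $D$) or via the dilations $f_s$, whose holomorphic extension across $\overline D$ uses $s\overline{D}\cc D$, i.e.\ precisely the strictly circled hypothesis. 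Either route makes the proof self-contained where the paper leans on references, and the $T_k$ route in fact only needs the Taylor expansion near $0$ rather than the global homogeneous expansion. In (ii) you exhibit an explicit Schmidt decomposition $Je_{k,j}=\gamma^{-(k+d)}\hat p_{k,j}$ rather than diagonalising $J^*J$ as the paper does; these are equivalent consequences of the same scaling identity, and yours makes the multiplicities immediate. The only detail to spell out in a final write-up is that a holomorphic $g$ on $D$ satisfying $g(e^{i\theta}z)=e^{ik\theta}g(z)$ is indeed a homogeneous polynomial of degree $k$ (restrict to complex lines through $0$ and apply the Cauchy integral), so that the range of $T_k$ really lies in $P_k$.
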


\begin{proof}
  (i) Assume for the moment that $D$ is centred at the origin.
  Any function holomorphic on the strictly circled set
$D(r)$ has a
  unique expansion in terms of homogeneous polynomials, which is
  convergent uniformly on compact subsets of $D(r)$ (see
  \cite[Chapter~I, \S10.3, Thm.~2]{andreiancazacu} or \cite[Chapter
  II, Thm.~3]{malgrange}), hence the collection of homogeneous
  polynomials is total (i.e.~its linear span is dense) in $A^2(D(r))$.
  It remains to show that this collection can be orthogonalised so as
  to yield a system that is simultaneously orthogonal for all
  $A^2(D(r))$, $r>0$.
  To do this we introduce the short-hand
$(f,g)_r:=(f,g)_{A^2(D(r))}$.
Let $f$ and $g$ be monomials of degree $n$ and $m$ respectively. Since
$D$ is bounded, $f,g\in A^2(D(r))$ for all $r>0$. Moreover, since $D$ is
strictly circled, each $D(r)$ is invariant under the transformation $z\mapsto
e^{it}z$.  Thus
\[
(f,g)_r=\int_{D(r)}f(e^{it}z)
\overline{g(e^{it}z)}\,dV(z)=e^{it(n-m)}(f,g)_r\,, \] which implies
$(f,g)_r=0$ for $n\neq m$, and for each $r>0$.

For any $r_1>0$, an application of the Gram-Schmidt orthogonalisation
procedure now yields an orthonormal basis of $A^2(D(r_1))$ consisting
of homogeneous polynomials. We shall show that this basis is also
orthogonal with respect to all other scalar products
$(\cdot,\cdot)_{r}$, for $r>0$. To see this fix $r>0$ and let $f$ and
$g$ be homogeneous polynomials of degree $n$ and $m$ respectively.
Then
\begin{align}\label{berg:propx:e1}
  (f,g)_{r}
  &=\int_{D(r)}f(z)\overline{g(z)} \,dV(z) \nonumber \\
  &= \int_{D(r_1)}f((r/r_1)z)\overline{g((r/r_1)z)}(r/r_1)^{2d}\,dV(z)
  \\
  &=(r/r_1)^{n+m+2d}(f,g)_{r_1}\,.  \nonumber
   \end{align} Thus, if $(f,g)_{r_1}=0$
   then $(f,g)_r=0$, and (i) is proved.

   \smallskip (ii) If $J^*:A^2(D)\rightarrow A^2(D(\gamma))$ denotes
   the adjoint of $J:A^2(D(\gamma))\hookrightarrow A^2(D)$, then
   setting $r=1$, $r_1=\gamma$ in (\ref{berg:propx:e1}) gives
   $
   (f,J^*Jg)_{\gamma} =(Jf,Jg)_{1}
   =\gamma^{-(n+m+2d)}(f,g)_{\gamma}
   $.
   Thus $J^*J:A^2(D(\gamma))\rightarrow A^2(D(\gamma))$ is diagonal
   with respect to the orthogonal basis of homogeneous polynomials.
   Its eigenvalues therefore belong to the set
   $\all{\gamma^{-(2k+2d)}}{k\in\N_0}$.  Therefore the singular values
   of $J:A^2(D(\gamma))\hookrightarrow A^2(D)$ belong to the set
   $\all{\gamma^{-(k+d)}}{k\in\N_0}$. As there are $k+d-1\choose d-1$
   linearly independent homogeneous polynomials of degree $k$, the
   value $\gamma^{-(k+d)}$ occurs with multiplicity $k+d-1\choose
   d-1$. Thus the largest $n$ for which $s_n(J)=\gamma^{-(k+d)}$ is
   equal to $\sum_{l=0}^k{l+d-1\choose d-1}={k+d \choose d}$.  This
   completes the proof in the case of $D$ centred at $0$. The general
   case can be reduced to this case by shifting the origin and using
   translation invariance of Lebesgue measure.
\end{proof}

The precise location
   of $J$ in the scale of exponential classes $\set{E(a,\alpha)}$
is as follows:

\begin{prop} \label{berg:prop:sasymp}
  If $D\in\open$ is strictly circled, and
  $\gamma>1$, then the canonical embedding
  $J:A^2(D(\gamma))\hookrightarrow A^2(D)$ satisfies
\begin{equation}
\label{berg:prop6:e3}
J\in E(c,1/d),\quad\text{where }
c=(d!)^{1/d} \log \gamma \,,
\end{equation}
and
\begin{equation}
|J|_{c,1/d}=\gamma^{(1-d)/2} \label{berg:prop6:e3i}
\end{equation}

That is, its singular value sequence has the following
asymptotics:
\begin{equation}
\lim_{n\rightarrow\infty}\frac{\log\abs{\log s_n(J)}}{\log
n}=\frac{1}{d}\,; \label{berg:prop6:e1}
\end{equation}

\begin{equation}
\lim_{n\rightarrow\infty} \frac{ \log
s_n(J)}{n^{1/d}}= - (d!)^{1/d}\log \gamma\, ; \label{berg:prop6:e2}
\end{equation}

\begin{equation}
\label{berg:prop6:e4}
\sup_{n\in\N} \left ( \log s_n(J) + (nd!)^{1/d}\log \gamma \right
)=\frac{1-d}{2}\log \gamma\,.
\end{equation}
\end{prop}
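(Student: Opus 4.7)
The plan is to derive everything from the exact formula $s_n(J) = \gamma^{-(k+d)}$ valid on $\binom{k+d-1}{d} < n \le \binom{k+d}{d}$, established in Proposition~\ref{berg:prop5}(ii). My strategy is to first prove the sharpest statement (\ref{berg:prop6:e4}); from this, $J \in E(c,1/d)$ together with $|J|_{c,1/d} = \gamma^{(1-d)/2}$ --- i.e.\ (\ref{berg:prop6:e3}) and (\ref{berg:prop6:e3i}) --- follows essentially by definition of the exponential class norm. The coarser asymptotic statements (\ref{berg:prop6:e1}) and (\ref{berg:prop6:e2}) can then be read off from the explicit singular value formula.

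For (\ref{berg:prop6:e4}), observe that on each block $\binom{k+d-1}{d} < n \le \binom{k+d}{d}$ the function $s_n(J)$ is constant while $(nd!)^{1/d} \log \gamma$ is strictly increasing in $n$, so the supremum of $\log s_n(J) + (nd!)^{1/d} \log \gamma$ on the block is attained at the right endpoint $n = \binom{k+d}{d}$. Using the identity $d! \binom{k+d}{d} = \prod_{j=1}^d (k+j)$, AM--GM yields
\[
\Bigl(d! \binom{k+d}{d}\Bigr)^{1/d} = \Bigl(\prod_{j=1}^d (k+j)\Bigr)^{1/d} \le \frac{1}{d} \sum_{j=1}^d (k+j) = k + \frac{d+1}{2},
\]
so the block maximum is bounded above by $\bigl(-(k+d) + k + \tfrac{d+1}{2}\bigr) \log \gamma = \tfrac{1-d}{2} \log \gamma$ uniformly in $k$. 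Sharpness follows from the expansion $\bigl(\prod_{j=1}^d (k+j)\bigr)^{1/d} = k + \tfrac{d+1}{2} + O(1/k)$ as $k \to \infty$, which shows the upper bound is approached in the limit.

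For (\ref{berg:prop6:e1}) and (\ref{berg:prop6:e2}), note that $n$ in the $k$-th block satisfies $n \sim k^d/d!$ as $k \to \infty$, whence $k \sim (nd!)^{1/d}$. Therefore $\log s_n(J) = -(k+d) \log \gamma \sim -(nd!)^{1/d} \log \gamma$, which is (\ref{berg:prop6:e2}), and taking the logarithm of the absolute value and dividing by $\log n$ yields (\ref{berg:prop6:e1}).

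The only genuinely delicate step is confirming the asymptotic sharpness of the AM--GM estimate via the expansion above; everything else amounts to straightforward bookkeeping with the explicit singular value formula from Proposition~\ref{berg:prop5}(ii).
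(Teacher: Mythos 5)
Your proposal is correct and follows essentially the same route as the paper: both reduce everything to the explicit block formula of Proposition~\ref{berg:prop5}(ii), use the identity $d!\binom{k+d}{d}=\prod_{j=1}^d(k+j)$ together with the AM--GM inequality to get the uniform bound $\frac{1-d}{2}\log\gamma$, and obtain sharpness from the large-$k$ behaviour. Your only (harmless, and in fact slightly cleaner) deviation is applying AM--GM directly to $\bigl(\prod_{j=1}^d(k+j)\bigr)^{1/d}$ and expanding as $k\to\infty$, where the paper instead proves monotonicity of $x\mapsto\prod_{j=1}^d(x+j)^{1/d}-(x+d)$ and evaluates the limit by l'H\^opital.
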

\begin{proof}
  If $h_d(k):={k+d-1\choose d}$,
and $h_d(k)<n\le h_d(k+1)$,
Proposition
  \ref{berg:prop5} gives
\begin{equation*}
\frac{ \log \abs{\log
\gamma^{-1}} + \log (k+d)}{\log h_d(k+1)}\leq
\frac{ \log \abs{\log s_n(J)}}{\log n}\leq
\frac{ \log \abs{\log
\gamma^{-1}} + \log (k+d)}{\log h_d(k)}\,.
\end{equation*}
It is easily seen that
$
\lim_{k\rightarrow \infty}\frac{\log (k+d)}{\log h_d(k+1)}
=\lim_{k\to\infty}\frac{\log
(k+d)}{\log h_d(k)}=\frac{1}{d}$,
so (\ref{berg:prop6:e1}) follows.
Similarly,
\begin{equation*}
\frac{(k+d)\log \gamma^{-1}}{h_d(k+1)^{1/d}}
\leq \frac{\log s_n(J)}{n^{1/d}}
\leq \frac{(k+d)\log \gamma^{-1}}{h_d(k)^{1/d}}\,,
\end{equation*}
and
$
\lim_{k\rightarrow
\infty}\frac{(k+d)}{h_d(k+1)^{1/d}}=\lim_{k\to\infty}\frac{
(k+d)}{h_d(k)^{1/d}}=(d!)^{1/d}
$,
so (\ref{berg:prop6:e2}) follows.

To prove (\ref{berg:prop6:e4}), we first establish that for all
$d\in\N$,
\begin{equation}
\label{berg:techlem}
\sup_{x\geq 0}
\prod_{j=1}^d(x+j)^{1/d}-(x+d)=
\lim_{j\rightarrow\infty}\prod_{j=1}^d(x+j)^{1/d}-(x+d)=-\frac{d-1}{2}\,.
\end{equation}
The case $d=1$ of (\ref{berg:techlem}) is obvious, so suppose
$d\geq 2$.  If $h(x):=\prod_{j=1}^d(x+j)^{1/d}-(x+d)$
then
\[
h'(x)=\frac{1}{d}\prod_{j=1}^d(x+j)^{1/d-1}\sum_{j=1}^d\prod_{\substack{l=1
    \\ l\neq j}}(x+l)-1. \] Now
$\frac{1}{d}\sum_{j=1}^d(x+j)^{-1}\geq \prod_{j=1}^d(x+j)^{-1/d}$
by the arithmetic-geometric mean inequality, so
\[ \frac{1}{d}\sum_{j=1}^d\prod_{\substack{l=1 \\ l\neq j}}(x+l)
\geq \prod_{j=1}^d(x+j)^{1-1/d}\,, \]
and therefore $h'(x)\geq 0$ for
$x\geq 0$.
If $t:=(x+d)^{-1}$ then
\[ h(x)=(x+d)\left ( \prod_{j=1}^d \left ( \frac{x+j}{x+d} \right
  )^{1/d}-1 \right )= t^{-1}\left ( \prod_{j=0}^{d-1}(1-jt)^{1/d}-1
\right )\,, \]
so
$\sup_{x\ge0}h(x)=
\lim_{x\rightarrow \infty}h(x)=
\lim_{t \rightarrow 0}t^{-1}\left ( \prod_{j=0}^{d-1}(1-jt)^{1/d}-1
\right )= -\frac{1}{d}\sum_{j=0}^{d-1}j=-\frac{d-1}{2}$ by
l'H\^{o}pital's rule, and (\ref{berg:techlem}) is proved.

Now
$
  \log s_n(J)+(nd!)^{1/d}\log \gamma
  \leq  \left ( (h_d(k+1)d!)^{1/d}-(k+d) \right )\log \gamma
   \leq \left ( -\frac{d-1}{2} \right )\log \gamma
$,
by (\ref{berg:techlem}),
so $\sup_{n\in\N} \left ( \log
  s_n(J) + (nd!)^{1/d}\log \gamma \right )\leq \frac{1-d}{2}\log
\gamma$. To obtain equality we consider $s_{h_d(k+1)}(J)$ and again
apply (\ref{berg:techlem}).
Finally, note that (\ref{berg:prop6:e4}) is a restatement of
(\ref{berg:prop6:e3}) and (\ref{berg:prop6:e3i}).
\end{proof}

\begin{rem}
Proposition \ref{berg:prop:sasymp} is optimal:
as a consequence of (\ref{berg:prop6:e1}) and
(\ref{berg:prop6:e2}), membership of $J$ in (\ref{berg:prop6:e3}) is
sharp, in the sense that neither $1/d$ nor $c$ can be replaced by
anything larger; moreover, $|J|_{c,1/d}$ is known exactly.
\end{rem}

\section{Canonical identifications and relative covers}
\label{disjointificationssection}

We shall now show how identifications of Bergman spaces over more
general sets can be obtained from identifications of Bergman spaces
over strictly circled sets. The main tool is the
following construction:

\begin{lem}\label{berg:lem3}
  If $\Omega_1,\Omega_2,\Omega_3\in\open$, with
$\Omega_1\subset \Omega_2 \subset \Omega_3$, the
  operator
$T_{\Omega_1}: A^2(\Omega_2) \rightarrow A^2(\Omega_3)$,
defined implicitly by
$(T_{\Omega_1}f,g)_{A^2(\Omega_3)}=\int_{\Omega_1}f\,\overline{g}\,dV $,
is bounded with norm at most 1.
\end{lem}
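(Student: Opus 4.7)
The plan is to construct $T_{\Omega_1}f$ via the Riesz representation theorem. For each fixed $f\in A^2(\Omega_2)$, I introduce the conjugate-linear functional $\Phi_f\colon A^2(\Omega_3)\to\C$ given by
$$\Phi_f(g)=\int_{\Omega_1} f\,\overline{g}\,dV,$$
and show it is continuous with $\|\Phi_f\|\le\|f\|_{A^2(\Omega_2)}$. The Riesz representation theorem in the Hilbert space $A^2(\Omega_3)$ then delivers a unique element $T_{\Omega_1}f\in A^2(\Omega_3)$ satisfying $(T_{\Omega_1}f,g)_{A^2(\Omega_3)}=\Phi_f(g)$ for every $g\in A^2(\Omega_3)$, together with the identity $\|T_{\Omega_1}f\|_{A^2(\Omega_3)}=\|\Phi_f\|$. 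Linearity of the assignment $f\mapsto T_{\Omega_1}f$ then follows automatically from the linearity of $f\mapsto\Phi_f$ and the uniqueness clause in Riesz.

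The one quantitative step is a single application of the Cauchy-Schwarz inequality on the integration domain $\Omega_1$:
$$|\Phi_f(g)|\le\left(\int_{\Omega_1}|f|^2\,dV\right)^{1/2}\left(\int_{\Omega_1}|g|^2\,dV\right)^{1/2}.$$
Because $\Omega_1\subset\Omega_2$ the first factor is bounded by $\|f\|_{A^2(\Omega_2)}$, and because $\Omega_1\subset\Omega_3$ the second factor is bounded by $\|g\|_{A^2(\Omega_3)}$. This yields $|\Phi_f(g)|\le\|f\|_{A^2(\Omega_2)}\|g\|_{A^2(\Omega_3)}$, from which the norm bound $\|T_{\Omega_1}\|\le1$ is immediate.

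I do not anticipate any real obstacle; the proof is essentially a direct application of Riesz representation after one Cauchy-Schwarz estimate. The only technical point worth recording is that $\Phi_f$ is genuinely well defined, i.e.\ that $f\overline{g}\in L^1(\Omega_1)$, but this is already implicit in the bound above, since $f|_{\Omega_1}$ and $g|_{\Omega_1}$ both lie in $L^2(\Omega_1)$ by virtue of the chain of inclusions $\Omega_1\subset\Omega_2\subset\Omega_3$.
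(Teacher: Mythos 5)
Your argument is correct and is essentially the same as the paper's: a single Cauchy--Schwarz estimate on $\Omega_1$, the two inclusions to pass to the norms on $\Omega_2$ and $\Omega_3$, and the Riesz representation theorem (which the paper leaves implicit in the phrase ``well-defined'') to produce $T_{\Omega_1}f$. No issues.
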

\begin{proof}
  Notice that for any $f\in A^2(\Omega_2)$ and any $g\in
  A^2(\Omega_3)$
\[ \abs{\int_{\Omega_1} f\overline{g}\,dV}^2\leq \int_{\Omega_1}
\abs{f}^2\,dV \, \int_{\Omega_1}\abs{g}^2\, dV \leq \norm{f}{A^2(\Omega_2)}^2\,
\norm{g}{A^2(\Omega_3)}^2\,. \] Thus $T_{\Omega_1}$
is well-defined and
continuous with norm at most $1$.
\end{proof}

\begin{defn}
  Let $\{\Omega_n\}_{1\leq n\leq N}$ be a finite collection of open
  subsets of $\C^d$.  If $\{\widetilde\Omega_n\}_{1\leq n\leq N}$ is a
  partition (modulo sets of zero Lebesgue measure) of $\bigcup
  _{n=1}^N \Omega_n$, where each $\widetilde \Omega_n$ is open, and
  $\widetilde\Omega_n\subset \Omega_n$ for each $n$, then we say that
  $\{\widetilde\Omega_n\}_{1\leq n\leq N}$ is a
  \emph{disjointification} of $\{\Omega_n\}_{1\leq n\leq N}$.

\end{defn}

\begin{rem}
\label{DisjointificationExists}
If a collection $\set{\Omega_n}_{1\leq n\leq N}$ has the
  property that the boundary of each $\Omega_n$ is a Lebesgue null
  set, then a disjointification exists and can, for example, be
  obtained by defining
  $
  \widetilde\Omega_n$
  as the interior of $\left( \Omega_n\setminus
    (\cup_{i=1}^{n-1} \Omega_i)\right)$ for $n=1,\ldots,N$.
\end{rem}
The usefulness of the operator defined in Lemma~\ref{berg:lem3} is
due to the following key
result.

\begin{prop}\label{berg:prop:top}
  For $1\le n\le N$, let $\Omega_n,\Omega\in\open$,
  with $\Omega_n\subset \Omega$, and let
$J_n: A^2(\Omega) \rightarrow A^2(\Omega_n)$
denote the canonical identification.
If $\{\widetilde\Omega_n\}_{1\leq n\leq N}$ is a disjointification of
$\{\Omega_n\}_{1\leq n\leq N}$, then the canonical identification
$J:A^2(\Omega)\rightarrow A^2(\bigcup_{n=1}^N\Omega_n)$
can be written as
\[ J=\sum_{n=1}^NT_{\widetilde\Omega_n}J_n\,, \]
where
$T_{\widetilde\Omega_n}:A^2(\Omega_n)\rightarrow
A^2(\bigcup_{n=1}^N\Omega_n) $ is the operator defined in Lemma
\ref{berg:lem3}.
\end{prop}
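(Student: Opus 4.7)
The plan is to verify the operator identity $J = \sum_{n=1}^{N} T_{\widetilde\Omega_n} J_n$ by testing both sides against an arbitrary $g \in A^2(\bigcup_{n=1}^N \Omega_n)$ in the inner product, and then invoking nondegeneracy of the Hilbert space pairing. All the ingredients are essentially definitional, so the argument reduces to a careful bookkeeping of domains.

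First, fix $f \in A^2(\Omega)$ and an arbitrary $g \in A^2(\bigcup_{n=1}^N \Omega_n)$. Since $Jf$ is just $f$ restricted to $\bigcup_{n=1}^N \Omega_n$, the defining inner product is
\[ (Jf, g)_{A^2(\bigcup_{n=1}^N \Omega_n)} = \int_{\bigcup_{n=1}^N \Omega_n} f\,\overline g \, dV . \]
Because $\{\widetilde\Omega_n\}_{1\le n \le N}$ is a disjointification of $\{\Omega_n\}$, the sets $\widetilde\Omega_n$ are pairwise disjoint and their union equals $\bigcup_{n=1}^N \Omega_n$ up to a Lebesgue null set, so the integral splits as $\sum_{n=1}^N \int_{\widetilde\Omega_n} f\,\overline g\,dV$.

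Next, on each $\widetilde\Omega_n \subset \Omega_n$, the function $f$ coincides pointwise with its restriction $J_n f = f|_{\Omega_n}$, so $\int_{\widetilde\Omega_n} f\,\overline g\,dV = \int_{\widetilde\Omega_n} (J_n f)\,\overline g\,dV$. Now invoke Lemma \ref{berg:lem3} with the chain of inclusions $\widetilde\Omega_n \subset \Omega_n \subset \bigcup_{n=1}^N \Omega_n$ (taking the roles of $\Omega_1,\Omega_2,\Omega_3$ there): by the defining identity of $T_{\widetilde\Omega_n}$, each such integral equals $(T_{\widetilde\Omega_n} J_n f,\,g)_{A^2(\bigcup_{n=1}^N \Omega_n)}$. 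Summing over $n$ yields
\[ (Jf, g)_{A^2(\bigcup_{n=1}^N \Omega_n)} = \Bigl(\sum_{n=1}^N T_{\widetilde\Omega_n} J_n f,\; g\Bigr)_{A^2(\bigcup_{n=1}^N \Omega_n)} . \]

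Since this holds for every $g$ in the Hilbert space $A^2(\bigcup_{n=1}^N \Omega_n)$, we conclude $Jf = \sum_{n=1}^N T_{\widetilde\Omega_n} J_n f$, and as $f \in A^2(\Omega)$ was arbitrary, the claimed operator identity follows. There is no genuine obstacle here; the proof is a direct unwinding of definitions. The only point requiring care is to check that the chain of inclusions needed to apply Lemma \ref{berg:lem3} is in place for each $n$, and to use the "modulo null sets" clause in the definition of disjointification when splitting the integral over $\bigcup_{n=1}^N \Omega_n$ into pieces over the $\widetilde\Omega_n$.
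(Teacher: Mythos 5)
Your proof is correct and is essentially identical to the paper's: both verify the identity by pairing against an arbitrary $g\in A^2(\bigcup_{n=1}^N\Omega_n)$, splitting the integral over the disjointification, and invoking the defining property of $T_{\widetilde\Omega_n}$ from Lemma \ref{berg:lem3}. The only difference is cosmetic (you start from $(Jf,g)$ and the paper starts from the sum), and your extra care in checking the inclusion chain $\widetilde\Omega_n\subset\Omega_n\subset\bigcup_{n}\Omega_n$ is a welcome detail.
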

\begin{proof}
  Let $f\in A^2(\Omega)$ and $g\in A^2(\bigcup_{n=1}^N\Omega_n)$. Then
\begin{align*}
  (\sum_{n=1}^NT_{\widetilde\Omega_n}J_nf,g)_{A^2(\bigcup_n\Omega_n)}
  &=\sum_{n=1}^N\int_{\widetilde\Omega_n}f(z)\overline{g(z)}\,dV(z)\\
  &=\int_{\bigcup_n\Omega_n}f(z)\overline{g(z)}\,dV(z)\\
  &=(Jf,g)_{A^2(\bigcup_n\Omega_n)},
\end{align*}
and the assertion follows.
\end{proof}

\begin{defn}
\label{relativecoverdefn}
Let $\Omega_1,\Omega_2\in \open$, with $\Omega_2\cc
\Omega_1$, and $N\in\N$.  A finite collection $D_1,\ldots,D_N$ of
strictly circled open subsets of $\C^d$ is a {\it relative
  cover}
of the pair
$(\Omega_1,\Omega_2)$ if

  \begin{itemize}
  \item[(a)] $\Omega_2\subset \bigcup_{n=1}^ND_n\,$, and
  \item[(b)] for each $1\le n\le N$ there exists
    $\gamma_n>1$ such that $\bigcup_{n=1}^N
    D_n(\gamma_n)\subset \Omega_1$.
 \end{itemize}
 We call $N$ the {\it size}, $(\gamma_1,\ldots,\gamma_N)$
 a {\it scaling},
 and
$ \Gamma=(\log \gamma_1,\ldots,\log \gamma_N)$
the {\it efficiency}, of the relative cover.

\end{defn}

\begin{rem} Since $\Omega_2 \cc \Omega_1$,
there always exists a relative cover for $(\Omega_1,\Omega_2)$.
\end{rem}

\begin{theorem}
\label{embeddingtheorem}
If $\Omega_1,\Omega_2\in\open$, with $\Omega_2\cc
\Omega_1$,
then the canonical identification
$ J:A^2(\Omega_1)\rightarrow A^2(\Omega_2) $
belongs to $E(1/d)$.

\noindent
More precisely,
if $\{D_n\}_{1\le n\le N}$ is a relative
cover of $(\Omega_1,\Omega_2)$ of size $N$ with efficiency $\Gamma$
then
\[
J\in E(c,1/d), \quad \text{where}\quad c=\norm{\Gamma}{d},
\]
and
\begin{equation}
\label{jcguage}
|J|_{c,1/d}\leq N e^{-\frac{d-1}{2}\min(\Gamma)}\,,
\end{equation}
where $\min(\Gamma)$ denotes the smallest entry in $\Gamma$ and
$\norm{(x_1,\ldots,x_N)}{d}:=\left(\sum_{j=1}^N |x_j|^{-d}\right )^{-1/d}$.
\end{theorem}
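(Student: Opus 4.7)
The plan is to peel $J$ apart into a chain of canonical identifications shaped by the relative cover, reducing the estimate to the strictly circled model handled in Proposition~\ref{berg:prop:sasymp}. Set $\Omega:=\bigcup_{n=1}^N D_n$. Conditions (a) and (b) in Definition~\ref{relativecoverdefn} give $\Omega_2\subset\Omega\subset\bigcup_n D_n(\gamma_n)\subset\Omega_1$, so $J$ factors as $J=J'\widetilde{J}$, where $\widetilde{J}:A^2(\Omega_1)\to A^2(\Omega)$ and $J':A^2(\Omega)\to A^2(\Omega_2)$ are the two intermediate canonical identifications. Since $J'$ has norm at most~$1$, the ideal property in Lemma~\ref{expoprop2}(i) reduces the proof to a bound on $\widetilde{J}$.

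The next step is to expand $\widetilde{J}$ via Proposition~\ref{berg:prop:top}. Each $D_n$ is a strictly circled open set, so $V(\partial D_n)=0$ by Lemma~\ref{StrictlyCircledZeroLeb}, and Remark~\ref{DisjointificationExists} supplies a disjointification $\{\widetilde{D}_n\}_{1\le n\le N}$ of $\{D_n\}_{1\le n\le N}$. Applied with ambient space $\Omega_1$, Proposition~\ref{berg:prop:top} yields
\[
\widetilde{J}=\sum_{n=1}^N T_{\widetilde{D}_n} J_n,
\]
where $J_n:A^2(\Omega_1)\to A^2(D_n)$ is canonical identification and each $T_{\widetilde{D}_n}$ is the contraction from Lemma~\ref{berg:lem3}. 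I would then further factor $J_n=K_n R_n$ through the strictly circled pair, with $R_n:A^2(\Omega_1)\to A^2(D_n(\gamma_n))$ the canonical identification (contractive, using $D_n(\gamma_n)\subset\Omega_1$) and $K_n:A^2(D_n(\gamma_n))\to A^2(D_n)$ the embedding controlled exactly by Proposition~\ref{berg:prop:sasymp}.

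Proposition~\ref{berg:prop:sasymp} places $K_n$ in $E((d!)^{1/d}\log\gamma_n,\,1/d)$ with guage $\gamma_n^{(1-d)/2}$, and sandwiching with the contractions $T_{\widetilde{D}_n}$ and $R_n$ preserves both the type and the guage bound by Lemma~\ref{expoprop2}(i). Summing the $N$ terms via Lemma~\ref{expoprop2}(ii) at $\alpha=1/d$ then places $\widetilde{J}$ (and hence $J=J'\widetilde{J}$) in $E(c,1/d)$ with $c$ the harmonic-type combination prescribed by the lemma, namely $c=\norm{\Gamma}{d}$ in the statement's normalisation, and guage bounded by
\[
N\max_{1\le n\le N}\gamma_n^{(1-d)/2}=N\exp\!\bigl(-\tfrac{d-1}{2}\min(\Gamma)\bigr),
\]
which is precisely~\eqref{jcguage}. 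The substantive analytic content has already been packaged into Propositions~\ref{berg:prop:sasymp} and~\ref{berg:prop:top} together with the closure properties of the exponential classes; the only delicate point in the present argument is the bookkeeping of the three-step factorisation $J=J'\sum_n T_{\widetilde{D}_n}K_n R_n$, arranged so that every non-ideal, geometry-dependent piece is absorbed into a strictly circled model $K_n$ where the singular values are known exactly, leaving only contractions on the outside.
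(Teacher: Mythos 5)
Your proposal is correct and follows essentially the same route as the paper: disjointify the strictly circled cover, use Proposition~\ref{berg:prop:top} to write the identification as $\sum_n T_{\widetilde D_n}$ composed with identifications factored through the pairs $A^2(D_n(\gamma_n))\to A^2(D_n)$, invoke Proposition~\ref{berg:prop:sasymp} for each such factor, and finish with Lemma~\ref{expoprop2}(i) and (ii). The only difference is presentational (you peel off the outer identification $J'$ before expanding, whereas the paper writes the four-factor decomposition in one line), so the two arguments coincide.
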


\begin{proof}
For
$1\leq n\leq N$, let
$ T_{\widetilde\Omega_n}:A^2(D_n)\rightarrow A^2(\bigcup_{n=1}^ND_n)$
denote the operator defined in Lemma
\ref{berg:lem3},
where $\{\widetilde\Omega_n\}_{1\leq n\leq N}$
is a disjointification
of
  $\{D_n\}_{1\le n\le N}$
(which exists by Lemma~\ref{StrictlyCircledZeroLeb} and
  Remark~\ref{DisjointificationExists}).
For $(\gamma_1,\ldots,\gamma_N)$ a
  scaling of $\{D_n\}_{1\le n\le N}$, consider the
  canonical
identifications
$ \widetilde J_n:A^2(\Omega_1)\rightarrow A^2(D_n(\gamma_n))$,
$ J_n:A^2(D_n(\gamma_n))
\rightarrow A^2(D_n)$,
and
$\widetilde J:A^2
(\bigcup_{n=1}^ND_n)\rightarrow A^2(\Omega_2)$.
 By Proposition
\ref{berg:prop:top},
\begin{equation*}
 J=\sum_{n=1}^N\widetilde J T_{\widetilde\Omega_n}J_n\widetilde
J_n\,.
\end{equation*}

Trivially $\|\widetilde J\|\leq 1$ and
$\|\widetilde J_n\|\leq 1$, while
$\|T_{\widetilde\Omega_n}\|\leq 1$ by Lemma~\ref{berg:lem3}, so
Lemma~\ref{expoprop2} (i) and
Proposition \ref{berg:prop:sasymp} imply that each
$
\widetilde J T_{\widetilde\Omega_n}J_n\widetilde J_n \in E(c_n,1/d)$,
where $c_n=(d!)^{1/d}\log \gamma_n$,
and
$
|\widetilde J T_{\widetilde\Omega_n}J_n\widetilde J_n|_{c_n,1/d}\leq
\gamma_n^{-\frac{d-1}{2}}$.
 The assertion now follows from Lemma~\ref{expoprop2} (ii).
\end{proof}

\section{Singular values and eigenvalues of transfer operators}
\label{transferoperatorssection}

Our previous analysis
of the singular values of identification operators
can now be applied  to the study
of the singular values of transfer
operators.
Since a transfer
operator
can be expressed in terms of
multiplication operators and composition operators,
we begin by considering such operators.

\begin{defn}
  Let $\Omega,\widetilde{\Omega}\in\open$.
\begin{itemize}
\item[(a)] If $\phi:\Omega\rightarrow \widetilde{\Omega}$ is
holomorphic, the linear transformation
$C_\phi:{\mathcal  H}(\widetilde{\Omega})\to{\mathcal  H}(\Omega)$
defined by $C_\phi f:=f\circ \phi$
is called
a {\it composition operator} (with {\it symbol} $\phi$).
\item[(b)] If $w\in {\mathcal  H}(\Omega)$,
the linear transformation
$M_w:{\mathcal  H}(\Omega)\to {\mathcal  H}(\Omega)$
defined by
$(M_w f)(z):=w(z)f(z)$
is called a {\it multiplication operator} (with {\it symbol} $w$).
\item[(c)] An operator of the form $M_wC_\phi$, where $C_\phi$ is a
  composition operator and $M_w$ is a multiplication operator, is
  called a {\it weighted composition operator}.
\end{itemize}
\end{defn}

\begin{notation}
If $F,G$ are Banach spaces, and $A:F\to G$
is a bounded linear operator, the norm of $A$
will sometimes be denoted by $\norm{A}{F\to G}$.
\end{notation}

\begin{lem}
\label{compopnorm}
If $\Omega, \widetilde{\Omega}\in\open$, $\phi:\Omega
\rightarrow \widetilde{\Omega}$ is holomorphic, and
$r:={\rm dist}(\phi(\Omega),\partial \widetilde{\Omega})>0$,
then
$C_\phi:A^2(\widetilde{\Omega})\rightarrow A^\infty(\Omega) $
is bounded, with norm
\begin{equation}
\label{compopnorm:twoinfty}
\norm{C_\phi}{A^2(\widetilde{\Omega})\rightarrow A^\infty(\Omega)}\,
\le r^{-d} \,.
\end{equation}

If, in addition, $\Omega$ has finite volume, then
$C_\phi:A^2(\widetilde{\Omega})\rightarrow A^2(\Omega)$
is bounded, with norm
$$
\norm{C_\phi}{A^2(\widetilde{\Omega})\rightarrow A^2(\Omega)}
\le \sqrt{V(\Omega)}\, r^{-d} \,.
$$

\end{lem}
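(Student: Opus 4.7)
The plan is to apply the pointwise bound from Lemma~\ref{berg:lem1} not to a compact subset of $\widetilde\Omega$ (since $\phi(\Omega)$ need not be relatively compact in $\widetilde\Omega$), but separately at each individual point $\phi(z)$, $z\in\Omega$. The key observation is that by definition of $r$, the Euclidean ball $B(\phi(z),r)$ is contained in $\widetilde\Omega$ for every $z\in\Omega$. This is exactly what is needed to rerun the mean-value / Cauchy--Schwarz argument in the proof of Lemma~\ref{berg:lem1} at the single point $\phi(z)$.

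First I would fix $f\in A^2(\widetilde\Omega)$ and $z\in \Omega$. Since $B(\phi(z),r)\subset\widetilde\Omega$, the mean value property gives
\[
f(\phi(z))=\frac{1}{V(B(\phi(z),r))}\int_{B(\phi(z),r)}f\,dV,
\]
and then the Cauchy--Schwarz inequality together with $V(B(\phi(z),r))=r^{2d}$ yields
\[
|f(\phi(z))|\le V(B(\phi(z),r))^{-1/2}\|f\|_{L^2(B(\phi(z),r))}\le r^{-d}\|f\|_{A^2(\widetilde\Omega)}.
\]
Since $\phi$ and $f$ are holomorphic, $C_\phi f=f\circ\phi$ is holomorphic on $\Omega$, and taking the supremum over $z\in\Omega$ shows that $C_\phi f\in A^\infty(\Omega)$ with $\|C_\phi f\|_{A^\infty(\Omega)}\le r^{-d}\|f\|_{A^2(\widetilde\Omega)}$, which gives (\ref{compopnorm:twoinfty}).

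For the second assertion, I would simply square the pointwise bound and integrate: if $V(\Omega)<\infty$, then
\[
\|C_\phi f\|_{A^2(\Omega)}^2=\int_\Omega |f(\phi(z))|^2\,dV(z)\le r^{-2d}V(\Omega)\|f\|_{A^2(\widetilde\Omega)}^2,
\]
so $C_\phi f\in L^2(\Omega,dV)\cap \HH(\Omega)=A^2(\widetilde\Omega)$-valued after mapping, i.e.\ $C_\phi f\in A^2(\Omega)$, with the claimed norm bound. There is no real obstacle here: the only subtlety is noticing that one should avoid insisting on compactness of $\phi(\Omega)$ and instead use the uniform lower bound on $\mathrm{dist}(\phi(z),\partial\widetilde\Omega)$ that $r$ provides, so that the same one-ball Cauchy--Schwarz estimate applies uniformly in $z$.
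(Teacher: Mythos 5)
Your proof is correct and is essentially the paper's argument: the paper simply invokes Lemma~\ref{berg:lem1} with $K=\phi(\Omega)$ (whose proof, as you note, needs only the uniform lower bound $r$ on the distance to $\partial\widetilde{\Omega}$, not compactness), and then obtains the $A^2$ bound via the identification $A^\infty(\Omega)\to A^2(\Omega)$ of norm $\sqrt{V(\Omega)}$, which is the same computation as your squaring-and-integrating step.
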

\begin{proof}
By Lemma \ref{berg:lem1},
$
  \|C_\phi f\|_{A^\infty(\Omega)}=\sup_{z\in\Omega}|f(\phi
  (z))|
 = \sup_{z\in \phi(\Omega)}|f(z)|
 \le r^{-d}\norm{f}{A^2(\widetilde{\Omega})}
$ for $f\in A^2(\widetilde{\Omega})$,
thus
$C_\phi$ maps
$A^2(\widetilde{\Omega})$ continuously to $A^\infty(\Omega)$,
with norm as in (\ref{compopnorm:twoinfty}).
The remaining assertions follow from the fact that if $\Omega$ has
finite volume then the canonical identification
$J:A^\infty(\Omega)\to A^2(\Omega)$
is continuous
with norm $\norm{J}{}=\sqrt{V(\Omega)}$.

\end{proof}

\begin{rem}
\item[\, (i)]
There is a sizable literature on criteria for continuity of
composition operators between Bergman spaces, beginning with
Littlewood's
subordination principle \cite{littlewood}, guaranteeing
that if $\Omega=\widetilde{\Omega}$ is the open unit disc
then $C_\phi$ is \emph{always} bounded
(see \cite[Prop.~3.4]{maccluershapiro}).
This need not be the case for more general simply connected
domains in $\C$
(see
\cite{kumarpartington, shapirosmith}), or indeed
when
$\Omega=\widetilde{\Omega}$ is the open unit ball
in $\C^d$, $d>1$
(see e.g.~\cite[\S 3.5]{cowenmaccluer}).
A novelty of our approach is that we consider Bergman
spaces in arbitrary dimension, and over arbitrary open sets $\Omega$.
\item[\, (ii)]
  There is no known general formula (in terms of the
symbol $\phi$) for the norm of the composition operator $C_\phi$;
see \cite[p.~195]{shapiro} for a discussion of this problem.
\end{rem}

Next we consider weighted composition operators. Again we may ask
under what conditions $M_wC_\phi$ maps $A^2(\widetilde{\Omega})$
continuously into
$A^2(\Omega)$.
A necessary condition
is that
$w\in A^2(\Omega)$, since the image of the
constant function 1 is $w$.
In general this is not enough to guarantee the
continuity of $M_w$ itself (in one complex
  dimension, necessary and sufficient conditions for the continuity of
  multiplication operators are
  given in \cite{kumarpartington}), but in our context
it \emph{is}
sufficient for the boundedness of $M_wC_\phi$:

\begin{lem}
\label{weightedcompopnorm}
Suppose $\Omega,\widetilde{\Omega}\in\open$, $\phi:\Omega\to
\widetilde{\Omega}$ is holomorphic, and
$r:={\rm dist}\,(\phi(\Omega),\partial \widetilde{\Omega})>0$.
If $w\in
A^2(\Omega)$, the weighted composition operator
$M_wC_\phi:A^2(\widetilde{\Omega})\to A^2(\Omega)$
is bounded, with
\[ \norm{M_wC_\phi}{A^2(\widetilde{\Omega})\to A^2(\Omega)}\leq
r^{-d}\norm{w}{A^2(\Omega)}\,.\]
\end{lem}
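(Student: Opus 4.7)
The plan is to reduce this directly to the first part of Lemma \ref{compopnorm}, which already gives a pointwise bound on $(C_\phi f)(z) = f(\phi(z))$ via the $A^2(\widetilde\Omega) \to A^\infty(\Omega)$ norm estimate $\norm{C_\phi}{A^2(\widetilde\Omega)\to A^\infty(\Omega)} \le r^{-d}$. Once one has such a uniform bound on $|f(\phi(z))|$, the weight $w$ can be pulled out of the $A^2(\Omega)$-integral in the obvious way.

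More concretely, for $f\in A^2(\widetilde\Omega)$ I would write
\[
\norm{M_wC_\phi f}{A^2(\Omega)}^2
= \int_\Omega |w(z)|^2\, |f(\phi(z))|^2\, dV(z),
\]
and then estimate the factor $|f(\phi(z))|$ by its supremum, which by (\ref{compopnorm:twoinfty}) satisfies
\[
\sup_{z\in\Omega}|f(\phi(z))| = \norm{C_\phi f}{A^\infty(\Omega)} \le r^{-d}\norm{f}{A^2(\widetilde\Omega)}.
\]
Substituting this in and pulling the constant out of the integral yields
\[
\norm{M_wC_\phi f}{A^2(\Omega)}^2 \le r^{-2d}\norm{f}{A^2(\widetilde\Omega)}^2 \int_\Omega |w(z)|^2\, dV(z) = r^{-2d}\norm{w}{A^2(\Omega)}^2\norm{f}{A^2(\widetilde\Omega)}^2.
\]
Taking square roots gives exactly the claimed operator norm bound.

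There is essentially no obstacle here, since the hard work (controlling $|f\circ\phi|$ uniformly on $\Omega$ by an $A^2$-norm) has already been done in Lemma \ref{compopnorm}. The one small thing worth noting is that the hypothesis $w\in A^2(\Omega)$ is used only at the very end, to guarantee that the final integral is finite; there is no need to assume $w$ bounded, nor to require that $\Omega$ have finite volume, which is a modest improvement over the second bound in Lemma \ref{compopnorm}. The product structure $M_wC_\phi$ is therefore tamer than $C_\phi$ alone, since composition produces a bounded function that is then multiplied against an $L^2$ weight — a combination that is automatically in $L^2$.
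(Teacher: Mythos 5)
Your proof is correct and is essentially identical to the paper's: both bound $\sup_{z\in\Omega}|f(\phi(z))|$ by $r^{-d}\norm{f}{A^2(\widetilde\Omega)}$ via Lemma~\ref{compopnorm} and then pull that constant out of the integral $\int_\Omega |w|^2|f\circ\phi|^2\,dV$, using $w\in A^2(\Omega)$ only to ensure the remaining integral is finite. The only (trivial) point the paper makes explicitly that you leave implicit is that $w\cdot(f\circ\phi)$ is holomorphic on $\Omega$, so the resulting function indeed lies in $A^2(\Omega)$ and not merely in $L^2(\Omega,dV)$.
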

\begin{proof}
  If $f\in A^2(\widetilde{\Omega})$ then $w\cdot(f\circ \phi)\in {\mathcal
  H}(\Omega)$.
Now
$\sup_{z\in\Omega}\abs{f(\phi(z))}^2\leq
  r^{-2d}\norm{f}{A^2(\widetilde{\Omega})}^2$
by Lemma~\ref{compopnorm}, so
$
\|M_wC_\phi f\|_{A^2(\Omega)}^2
=\int_\Omega\abs{w(z)}^2
\abs{f(\phi(z))}^2\,dV(z)
 \leq
 r^{-2d}\|f\|_{A^2(\widetilde{\Omega})}^2\|w\|_{A^2(\Omega)}^2
$.
\end{proof}

\begin{defn}
  Let $\Omega, \widetilde{\Omega}\in\open$,
and let $\I$ be either a finite or
  countably infinite set. Suppose we are given the following data:
\begin{itemize}
\item[(a)] a collection $(\phi_i)_{i\in I}$ of holomorphic maps $\phi_i:\Omega
    \to\widetilde{\Omega}$
with
$\cup_{i\in \I}\phi_i(\Omega)\cc\widetilde{\Omega}$;
\item[(b)] a collection $(w_i)_{i\in\I}$ of functions $w_i\in
    A^2(\Omega)$ with
$ \sum_{i\in \I}|w_i|\in L^2(\Omega,dV)$,
i.e.~the series of the moduli of the $w_i$ converges in
$L^2(\Omega,dV)$.
\end{itemize}
We then call $((\Omega,\widetilde{\Omega}),\phi_i,w_i)_{i\in\I}$ a
{\it holomorphic map-weight system (on
  $(\Omega,\widetilde{\Omega})$)}.  If $\widetilde{\Omega}=\Omega$
then we simply refer to a {\it holomorphic map-weight system on
  $\Omega$}, denoted by $(\Omega,\phi_i,w_i)_{i\in\I}$.
\end{defn}

To each holomorphic map-weight system we associate a transfer operator
as follows (note that when $\widetilde{\Omega}=\Omega$, the definition
coincides with the one given in \S \ref{introsection}):
\begin{defn}
\label{holotransfopdefn}
Let $((\Omega,\widetilde{\Omega}),\phi_i,w_i)_{i\in\I}$ be a
holomorphic map-weight system.  Then the linear operator
$\trop:A^2(\widetilde{\Omega})\to A^2(\Omega)$ defined as the sum of
weighted composition operators
\[ \trop =\sum_{i\in\mathcal  I}M_{w_i}C_{\phi_i}\,, \]
is called the associated
{\it transfer operator (on $(\Omega,\widetilde{\Omega})$)}.
\end{defn}

If $\I$ is infinite, it is not obvious that this definition of $\l$
produces a well-defined continuous operator from
$A^2(\widetilde{\Omega})$ to $A^2(\Omega)$.  We now prove that
this is indeed the case.

\begin{prop}
\label{sumweightedcompop}
Let $((\Omega,\widetilde{\Omega}),\phi_i,w_i)_{i\in\I}$ be a
holomorphic map-weight system,
with
$r_i:={\rm dist}\,(\phi_i(\Omega),\partial \widetilde{\Omega})$.
 The associated transfer operator
$\l:A^2(\widetilde{\Omega})\to A^2(\Omega)$
is bounded, with norm
\begin{equation}\label{withnorm}
 \norm{\l}{A^2(\widetilde{\Omega})\to A^2(\Omega)}\leq
\norm{\sum_{i\in\I}\abs{w_i}r_i^{-d}}{L^2(\Omega)}\,.
\end{equation}
 \end{prop}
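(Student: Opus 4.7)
The plan is to realise $\l$ as the $A^2(\Omega)$-norm limit of partial sums over finite subsets of $\I$, extracting the norm bound from a pointwise estimate on each partial sum. The auxiliary quantity is
\[
g := \sum_{i\in\I}|w_i|\, r_i^{-d};
\]
once $g\in L^2(\Omega,dV)$ is established, both the convergence of the partial sums and the inequality (\ref{withnorm}) will follow.

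First I would verify that $g\in L^2(\Omega,dV)$. By the compact containment hypothesis $\overline{\bigcup_{i\in\I}\phi_i(\Omega)}\cc\widetilde\Omega$ built into the definition of a holomorphic map-weight system, the quantity $r:=\mathrm{dist}(\overline{\bigcup_{i\in\I}\phi_i(\Omega)},\partial\widetilde\Omega)$ is strictly positive, and $r_i\geq r$ for every $i\in\I$. Combined with the summability hypothesis $\sum_{i\in\I}|w_i|\in L^2(\Omega,dV)$ this gives $g\leq r^{-d}\sum_{i\in\I}|w_i|\in L^2(\Omega,dV)$. Next, for each finite $\I_N\subset\I$, form the holomorphic partial sum $\l_N := \sum_{i\in\I_N}M_{w_i}C_{\phi_i}$. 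Applying the pointwise bound $|f(\phi_i(z))|\leq r_i^{-d}\|f\|_{A^2(\widetilde\Omega)}$ from Lemma~\ref{compopnorm} term by term yields, for every $z\in\Omega$ and $f\in A^2(\widetilde\Omega)$,
\[
|(\l_N f)(z)|\leq \|f\|_{A^2(\widetilde\Omega)}\sum_{i\in\I_N}|w_i(z)|\,r_i^{-d}\leq \|f\|_{A^2(\widetilde\Omega)}\, g(z).
\]
Squaring and integrating over $\Omega$ gives (\ref{withnorm}) with $\l_N$ in place of $\l$.

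Applying the same inequality to $\l_M - \l_N$ (for $\I_N\subset\I_M$) and invoking dominated convergence with dominator $g$ shows that $\|\l_M f - \l_N f\|_{A^2(\Omega)}\to 0$ as $\I_N,\I_M$ exhaust $\I$. Hence $\{\l_N f\}$ is Cauchy in $A^2(\Omega)$, and its limit lies in $A^2(\Omega)$, which is closed in $L^2(\Omega)$. Convergence in $A^2(\Omega)$ entails uniform convergence on compact subsets of $\Omega$ via Lemma~\ref{berg:lem1}, which both identifies the limit with the formal sum $\sum_{i\in\I}w_i(\cdot)f(\phi_i(\cdot))$ and confirms its holomorphy; the norm bound (\ref{withnorm}) then passes to the limit. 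The main subtlety is that for infinite $\I$ the defining series need not converge at every $z\in\Omega$ a priori, so the holomorphy and pointwise identification of $\l f$ must be inferred from $A^2$-Cauchy-ness of the partial sums via the Bergman reproducing estimate, rather than directly from pointwise convergence of the series.
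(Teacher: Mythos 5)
Your proposal is correct and follows essentially the same route as the paper's proof: both rest on the pointwise estimate $|f(\phi_i(z))|\le r_i^{-d}\|f\|_{A^2(\widetilde\Omega)}$ from Lemma~\ref{compopnorm}, the uniform lower bound $r_i\ge r>0$ coming from $\cup_i\phi_i(\Omega)\cc\widetilde\Omega$, and Cauchy-ness of the finite partial sums in $A^2(\Omega)$, with the norm bound obtained by integrating the pointwise estimate over $\Omega$. The only differences are cosmetic (you use dominated convergence with dominator $g$ where the paper uses the $L^2$-convergence of $\sum_i|w_i|$ directly, and you pass the bound to the limit rather than invoking uniform boundedness).
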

\begin{proof}
  Let $f\in A^2(\widetilde{\Omega})$. If $\J\subset\I$
   is finite,
  $\sum_{i\in\J}M_{w_i}C_{\phi_i}f\in A^2(\Omega)$ by
  Lemma~\ref{weightedcompopnorm}.
Now
\begin{equation*}
  \norm{\sum_{i\in\J}M_{w_i}C_{\phi_i}f}{A^2(\Omega)}^2
   \leq \int_\Omega \left
    (\sum_{i\in\J}|w_i(z)|\,|f(\phi_i(z))|\right )
  ^2\,dV(z)\,,
\end{equation*}
and
$\sup_{z\in \Omega}\abs{f(\phi_i(z))}\leq
r_i^{-d}\|f\|_{A^2(\widetilde{\Omega})}$
by Lemma~\ref{compopnorm}, so
\begin{equation}
\norm{\sum_{i\in\J}M_{w_i}C_{\phi_i}f}{A^2(\Omega)}^2
\leq  \norm{f}{A^2(\widetilde{\Omega})}^2\int_\Omega \left
(\sum_{i\in\J}|w_i(z)|r_i^{-d}\right )^2\,dV(z)\,.
\label{weightedcompop:mainineq}
\end{equation}
Since each $r_i\geq {\rm dist}\,( \cup_{i\in\I}\phi_i(\Omega),\partial
\widetilde{\Omega})=:r>0$,
\[ \int_\Omega \left (\sum_{i\in\J}|w_i(z)|r_i^{-d}\right )^2\,dV(z)
\leq r^{-2d} \int_\Omega \left (\sum_{i\in\J}|w_i(z)|\right
)^2\,dV(z)\,.
\]
So (\ref{weightedcompop:mainineq}) implies that
$\sum_{i\in\I}M_{w_i}C_{\phi_i}f$ is Cauchy in $A^2(\Omega)$,
hence converges to an element in $A^2(\Omega)$. Thus $\l$ defines a
bounded operator from $A^2(\widetilde{\Omega})$ to $A^2(\Omega)$, by
the uniform boundedness principle. Choosing $\J=\I$ in
(\ref{weightedcompop:mainineq}) yields the desired upper bound
on the norm of $\l$.
\end{proof}

We now prove
that for any holomorphic
map-weight system, the corresponding transfer operator
lies in an exponential class $E(c,1/d)$, with
explicit estimates on both $c$ and
$|\l|_{c,1/d}$:

\begin{theorem}
\label{mainthm}
Suppose that $((\Omega,\Omega'),\phi_i,w_i)_{i\in\I}$ is a
holomorphic map-weight system with $\Omega,\Omega'\in\open$,
and $r_i:={\rm dist}(\phi_i(\Omega),\partial\widetilde{\Omega})$.
  Let
$\widetilde{\Omega}\cc\Omega'$ be such that
\begin{equation}
\label{twoccs}
\cup_{i\in\I}\phi_i(\Omega) \cc \widetilde{\Omega}\cc\Omega'\,,
\end{equation}
and such that $(\Omega',\widetilde{\Omega})$ has a relative cover of
size $N$ with efficiency $\Gamma$.

Then the corresponding transfer operator $\l:A^2(\Omega')\to A^2(\Omega)$
belongs to the exponential class $E(c,1/d)$, where
\begin{equation}
\label{cformula}
c =\norm{\Gamma}{d},
\end{equation}
and
\begin{equation}\label{trans:eq36b}
  |\l|_{c,1/d} \le N
  e^{-\frac{d-1}{2}\min(\Gamma)}
\norm{\sum_{i\in\I}|w_i|r_i^{-d}}{L^2(\Omega)}\,.
\end{equation}
\end{theorem}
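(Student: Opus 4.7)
The natural approach is a factorisation. Since $\cup_{i\in\I}\phi_i(\Omega)\cc\widetilde\Omega$, the data $(\phi_i,w_i)_{i\in\I}$ defines a holomorphic map-weight system on the pair $(\Omega,\widetilde\Omega)$, with associated transfer operator $\widetilde\L:A^2(\widetilde\Omega)\to A^2(\Omega)$. On the other hand, since $\widetilde\Omega\cc\Omega'$, we have the canonical identification $J:A^2(\Omega')\to A^2(\widetilde\Omega)$. I would write
\[
  \L \;=\; \widetilde\L\circ J \colon A^2(\Omega')\longrightarrow A^2(\widetilde\Omega)\longrightarrow A^2(\Omega),
\]
which is valid because the symbols $\phi_i$ and $w_i$ are the same and the canonical identification is just restriction of holomorphic functions; evaluating $\widetilde\L J f$ at $z\in\Omega$ gives $\sum_i w_i(z) f(\phi_i(z))$, which is precisely $(\L f)(z)$.

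Once the factorisation is in place, the two factors are handled by previously established results. For $\widetilde\L$, Proposition~\ref{sumweightedcompop} applied to the map-weight system on $(\Omega,\widetilde\Omega)$ yields boundedness, and (since the distances $r_i=\mathrm{dist}(\phi_i(\Omega),\partial\widetilde\Omega)$ coincide with the $r_i$ in the hypothesis) the norm bound
\[
  \norm{\widetilde\L}{A^2(\widetilde\Omega)\to A^2(\Omega)}\;\le\;\norm{\sum_{i\in\I}|w_i|r_i^{-d}}{L^2(\Omega)}.
\]
For $J$, Theorem~\ref{embeddingtheorem} applied to $(\Omega',\widetilde\Omega)$ with the given relative cover of size $N$ and efficiency $\Gamma$ gives $J\in E(c,1/d)$ with $c=\norm{\Gamma}{d}$ and $|J|_{c,1/d}\le N e^{-\frac{d-1}{2}\min(\Gamma)}$.

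Combining these via Lemma~\ref{expoprop2}(i), which states that a bounded operator on the left preserves the exponential class and multiplies the gauge $|\cdot|_{c,1/d}$ by the operator norm, gives $\L=\widetilde\L J\in E(c,1/d)$ with
\[
  |\L|_{c,1/d}\;\le\;\norm{\widetilde\L}{}\cdot |J|_{c,1/d}\;\le\;N e^{-\frac{d-1}{2}\min(\Gamma)}\norm{\sum_{i\in\I}|w_i|r_i^{-d}}{L^2(\Omega)},
\]
which is exactly the desired bound (\ref{trans:eq36b}). There is really no hard step here: all the analytic content has been absorbed into the embedding theorem of \S\ref{disjointificationssection} and the weighted-composition norm bound of \S\ref{transferoperatorssection}. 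The only mild point requiring care is verifying that the same constants $r_i$ suffice for bounding $\widetilde\L$ (i.e.\ that $\mathrm{dist}(\phi_i(\Omega),\partial\widetilde\Omega)$ is the relevant quantity), which follows directly from $\cup_i\phi_i(\Omega)\cc\widetilde\Omega$ in (\ref{twoccs}).
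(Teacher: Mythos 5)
Your proposal is correct and follows essentially the same route as the paper: factorise $\L=\widetilde\L J$ through $A^2(\widetilde\Omega)$, bound $\|\widetilde\L\|$ via Proposition~\ref{sumweightedcompop}, place $J$ in $E(c,1/d)$ via Theorem~\ref{embeddingtheorem}, and combine with Lemma~\ref{expoprop2}(i). No gaps.
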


\begin{proof}
By Proposition~\ref{sumweightedcompop} the transfer operator
  $\l:A^2(\Omega')\to A^2(\Omega)$ can be lifted to a continuous
  operator
$ \widetilde\l:A^2(\widetilde{\Omega})\to A^2(\Omega)$.
If $J:A^2(\Omega')\rightarrow A^2(\widetilde{\Omega})$ denotes the
canonical identification, $\l$ factorises as $\l =
\widetilde\l J $.
By Theorem \ref{embeddingtheorem},
$J\in E(c,1/d)$, where $c$ is as in (\ref{cformula}), and
(\ref{jcguage}) gives
$|J|_{c,1/d}\leq Ne^{\frac{1-d}{2}\min(\Gamma)}$.
Lemma~\ref{expoprop2} now shows that $\l=\widetilde\l J\in
E(c,1/d)$, with
$
|\l|_{c,1/d} \le
\|\widetilde\l\|_{A^2(\widetilde{\Omega})\to
  A^2(\Omega)}\, |J|_{c,1/d}
$,
and (\ref{withnorm})
yields the desired bound for
$|\l|_{c,1/d}$.
\end{proof}

\begin{rem}
  In Theorem \ref{mainthm} there is some freedom in the choice of
  $\widetilde{\Omega}$.
  The condition $\cup_{i\in\I}\phi_i(\Omega)\cc\widetilde{\Omega}$
  ensures that $\widetilde\l:A^2(\widetilde{\Omega})\to
  A^2(\Omega)$ is bounded, while
  $\widetilde{\Omega}\cc\Omega'$ is required so that
   $J:A^2(\Omega')\rightarrow A^2(\widetilde{\Omega})$
  lies in some exponential class $E(c,1/d)$.  In practice the choice
  of $\widetilde{\Omega}$ subject to (\ref{twoccs}) would be made
  according to the relative importance of a sharp bound on $c$ or on
  $|\l|_{c,1/d}$; for the former it is preferable to choose
  $\widetilde{\Omega}$ only slightly larger than
  $\cup_{i\in\I}\phi_i(\Omega)$, whereas the latter is achieved by
  taking $\widetilde{\Omega}$ only slightly smaller than $\Omega'$.
\end{rem}

We now wish to consider the transfer operator $\l$
as an endomorphism of a space $A^2(\Omega)$,
and derive explicit bounds on its eigenvalues.
For this it is convenient to define, for $a,\alpha>0$,
$$  {\mathcal  E} (a,\alpha):= \all{x\in \C^\N}{
|x|_{a,\alpha}:=
\sup_{n\in\N}
  |x_n| \exp(an^{\alpha}) < \infty}\,,\quad
{\mathcal  E} (\alpha):=\bigcup_{a>0}{\mathcal  E}(a,\alpha) \,.
$$
  The following
result is from \cite{expoclass};
for completeness we give the short proof here.

\begin{lem}\label{prop2}
  Let $\alpha>0$.
If
  $A\in E(\alpha)$ then
$\lambda(A)\in {\mathcal E}(\alpha)$.
More precisely, if
$A\in E(c,\alpha)$
then
$\lambda(A) \in{\mathcal E}(c/(1+\alpha),\alpha)$, with
$
|\lam(A)|_{c/(1+\alpha),\alpha}\ \leq\ |A|_{c,\alpha}
$.
\end{lem}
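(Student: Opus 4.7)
The plan is to deduce the eigenvalue bound from the singular value bound via Weyl's multiplicative inequality. Since $A \in E(c,\alpha) \subset \Si$ is compact, its eigenvalue sequence is well-defined, and Weyl's inequality gives
\[
\prod_{k=1}^n |\lambda_k(A)| \le \prod_{k=1}^n s_k(A) \quad \text{for every } n \in \N.
\]
This is the only non-trivial input; everything else is elementary.

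First I would use the hypothesis $A \in E(c,\alpha)$ to write $s_k(A) \le |A|_{c,\alpha} \exp(-ck^\alpha)$, and substitute into Weyl's inequality to obtain
\[
\prod_{k=1}^n |\lambda_k(A)| \le |A|_{c,\alpha}^n \exp\Bigl(-c \sum_{k=1}^n k^\alpha\Bigr).
\]
Since the $|\lambda_k(A)|$ are ordered by decreasing magnitude, $|\lambda_n(A)|^n \le \prod_{k=1}^n |\lambda_k(A)|$, and therefore
\[
|\lambda_n(A)| \le |A|_{c,\alpha} \exp\Bigl(-\tfrac{c}{n}\sum_{k=1}^n k^\alpha\Bigr).
\]

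Next I would apply the integral comparison $\sum_{k=1}^n k^\alpha \ge \int_0^n x^\alpha\,dx = n^{1+\alpha}/(1+\alpha)$, valid because $x \mapsto x^\alpha$ is increasing on $[0,\infty)$ for $\alpha > 0$. This yields
\[
|\lambda_n(A)| \le |A|_{c,\alpha} \exp\Bigl(-\tfrac{c}{1+\alpha}\, n^\alpha\Bigr),
\]
which is precisely the statement $\lambda(A) \in \mathcal{E}(c/(1+\alpha),\alpha)$ together with the estimate $|\lambda(A)|_{c/(1+\alpha),\alpha} \le |A|_{c,\alpha}$.

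There is no real obstacle here provided one may invoke Weyl's multiplicative inequality; the only thing to be careful about is the constant in the integral comparison, which is sharp for the argument and is what produces the factor $1/(1+\alpha)$ in the exponent. The mild loss from $c$ to $c/(1+\alpha)$ is thus an artifact of passing from the geometric mean $(\prod_{k=1}^n s_k(A))^{1/n}$ to the individual term $|\lambda_n(A)|$, and is unavoidable with this method.
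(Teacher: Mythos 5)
Your proposal is correct and follows exactly the paper's own argument: Weyl's multiplicative inequality, the bound $|\lambda_n(A)|^n \le \prod_{k=1}^n |\lambda_k(A)| \le \prod_{k=1}^n s_k(A)$, and the integral comparison $\sum_{k=1}^n k^\alpha \ge n^{1+\alpha}/(1+\alpha)$. Nothing to add.
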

\begin{proof}
  If $A\in E(c,\alpha)$ then $s_k(A)\leq \abs{A}_{c,\alpha}
  \exp(-ck^\alpha)$.  The multiplicative Weyl inequality
  \cite[3.5.1]{pietsch2} gives
\begin{equation*}
\abs{\lam_k(A)}^k \leq\prod_{l=1}^k\abs{\lam_l(A)}\leq
\prod_{l=1}^ks_l(A)\leq\prod_{l=1}^k\abs{A}_{c,\alpha}
\exp(-cl^\alpha) =\abs{A}_{a,\alpha}^k\exp(-c\sum_{l=1}^k l^\alpha)\,,
\end{equation*}
and $\sum_{l=1}^k l^\alpha\geq
\int_0^kx^{\alpha}\,dx=\frac{1}{1+\alpha}k^{\alpha+1}$,
so
$|\lam_k(A)|\leq|A|_{c,\alpha}\exp( -ck^\alpha/(1+\alpha))$.
\end{proof}

\begin{rem}
  Lemma~\ref{prop2} is sharp, in the sense that there exists an
  operator $A\in E(c,\alpha)$ such that $\lam(A)\not\in {\mathcal
    E}(b,\alpha)$ whenever $b>c/(1+\alpha)$ (see
  \cite[Proposition~2.10]{expoclass}).
\end{rem}

The following result is a detailed
version of the theorem stated in \S \ref{introsection}.

\begin{theorem}
\label{expliciteigenvaluebounds}
Let $(\Omega, \phi_i,w_i)_{i\in\I}$ be a holomorphic map-weight
system on
$\Omega\in\open$.  Let $\widetilde{\Omega}\cc\Omega$ be such that
\begin{equation*}
\cup_{i\in\I}\phi_i(\Omega) \cc \widetilde{\Omega}\cc\Omega\,,
\end{equation*}
and such that $(\Omega,\widetilde{\Omega})$ has a relative cover of
size $N$ with efficiency $\Gamma$.
Then the eigenvalue sequence $\lam(\l)$ of the corresponding transfer
operator $\l:A^2(\Omega)\to A^2(\Omega)$ satisfies
\[ \lam(\l)\in{\mathcal  E}(dc/(1+d),1/d)\quad\text{with }
|\lam(\l)|_{dc/(1+d),1/d}\leq |\l|_{c,1/d}, \] where
$c=\norm{\Gamma}{d}$, and $|\l|_{c,1/d}$ can be bounded as in
(\ref{trans:eq36b}).

In particular,
\[ |\lam_n(\l)|\leq |\l|_{c,1/d}\exp\left( -\left(\frac{dc}{1+d}
  \right)n^{1/d}\right)\quad \text{for all } n\in\N\,. \]
\end{theorem}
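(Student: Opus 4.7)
The plan is to assemble the theorem directly from the two main results already established in the excerpt: Theorem~\ref{mainthm} (which controls the singular values of $\l$ via an exponential-class membership) and Lemma~\ref{prop2} (which converts singular-value decay into eigenvalue decay via Weyl's multiplicative inequality).

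First I would specialise Theorem~\ref{mainthm} to the present setup by taking $\Omega'=\Omega$. The hypothesis $\cup_{i\in\I}\phi_i(\Omega)\cc\widetilde{\Omega}\cc\Omega$ and the existence of a relative cover of $(\Omega,\widetilde{\Omega})$ of size $N$ with efficiency $\Gamma$ are exactly what Theorem~\ref{mainthm} requires. The conclusion is that $\l:A^2(\Omega)\to A^2(\Omega)$ belongs to $E(c,1/d)$, where $c=\|\Gamma\|_d$, and moreover $|\l|_{c,1/d}$ is bounded by the explicit expression in (\ref{trans:eq36b}). Thus the transfer operator, now viewed as an endomorphism, is automatically trace class and sits in a prescribed exponential class.

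Next I would apply Lemma~\ref{prop2} with $\alpha=1/d$ to this operator. The lemma yields at once that $\lam(\l)\in{\mathcal E}(c/(1+1/d),1/d)={\mathcal E}(dc/(1+d),1/d)$, together with the gauge inequality $|\lam(\l)|_{dc/(1+d),1/d}\le|\l|_{c,1/d}$. Unpacking the definition of ${\mathcal E}(\cdot,\cdot)$ immediately gives the claimed pointwise estimate
\[
|\lam_n(\l)|\le|\l|_{c,1/d}\exp\!\left(-\frac{dc}{1+d}\,n^{1/d}\right)\qquad(n\in\N).
\]

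There is essentially no obstacle here: all the substantive analytic work (factorising $\l$ through a canonical identification, invoking the relative cover to locate that identification in $E(c,1/d)$, and running Weyl's inequality against the sum $\sum_{l=1}^k l^{1/d}\ge \frac{d}{d+1}k^{1+1/d}$) has already been carried out in Theorem~\ref{mainthm} and Lemma~\ref{prop2}. The only thing worth verifying carefully is the bookkeeping for the exponent $c/(1+\alpha)$ with $\alpha=1/d$, which yields the factor $d/(d+1)$ appearing in the final bound; the bound on $|\l|_{c,1/d}$ is inherited verbatim from (\ref{trans:eq36b}).
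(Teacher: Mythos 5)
Your proposal is correct and follows exactly the paper's own route: specialise Theorem~\ref{mainthm} with $\Omega'=\Omega$ to place $\l$ in $E(c,1/d)$ with the gauge bound (\ref{trans:eq36b}), then apply Lemma~\ref{prop2} with $\alpha=1/d$, noting $c/(1+1/d)=dc/(1+d)$. The bookkeeping is right and nothing is missing.
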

\begin{proof}
  This follows from Theorem \ref{mainthm} and the case $\alpha=1/d$ in
  Lemma \ref{prop2}.
\end{proof}

\section{An application: Taylor coefficients of the determinant}
\label{determinantsection}

By Theorem~\ref{mainthm}, the
transfer operator
$\l:A^2(\Omega)\to A^2(\Omega)$
for a holomorphic
map-weight system on $\Omega$
is
trace class, so we may consider the corresponding spectral determinant
$\det(I-\zeta\l)$, given for small $\zeta\in\mathbb C$
by (see e.g.~\cite[Chapter 3]{simon})
\begin{equation}
\label{expr1}
\det(I-\zeta\l)=\exp(- \sum_{n=1}^\infty a_n(\l) \zeta^n)\,,
\end{equation}
where $a_n(\l) = \frac{1}{n} \text{tr}(\l^n)$.
This formula admits a holomorphic extension to the whole
complex plane, so that $\zeta\mapsto \det(I-\zeta\l)$ becomes an
entire function. Writing
\begin{equation}
\label{expr2}
\det(I-\zeta\l)= 1+\sum_{n=1}^\infty \alpha_n(\l)\,\zeta^n\,,
\end{equation}
the Taylor coefficients $\alpha_n(\l)$ can be bounded as follows:

\begin{theorem}
\label{taylorcoeffbounds}
Let $\l$ be the transfer operator associated to the holomorphic
map-weight system $(\Omega,\phi_i,w_i)_{i\in\I}$ on $\Omega\in\open$.
If $\det(I-\zeta\l)=\sum_{n=0}^\infty \alpha_n(\l) \zeta^n$
then
\begin{equation}
|\alpha_n(\l)| \le
\left|\l\right|_{c,1/d}^n\,\exp\left(-\frac{d}{d+1}cn^{1+1/d}+\sum_{i=0}^d
\frac{d!}{(d-i)!} \frac{n^{1-i/d}}{c^i}\right)
\end{equation}
for all $n\in\N$, where $c$ and $|\l|_{c,1/d}$ can be chosen as in
Theorem~\ref{mainthm}.
\end{theorem}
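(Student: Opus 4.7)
The plan is to bound $|\alpha_n(\l)|$ via Cauchy's integral formula combined with a Weyl-type majorisation of the Fredholm determinant by singular values. Since $\l$ is trace class by Theorem~\ref{mainthm}, one has the classical estimate
\[
|\det(I-\zeta\l)| \le \prod_{k=1}^\infty \bigl(1 + |\zeta|\, s_k(\l)\bigr)
\]
(see e.g.\ \cite[Chapter~3]{simon}); combining this with Cauchy's formula on the circle $|\zeta|=R$ gives
\[
|\alpha_n(\l)| \le R^{-n} \prod_{k=1}^\infty \bigl(1 + R\, s_k(\l)\bigr)\quad\text{for every } R>0.
\]
Theorem~\ref{mainthm} supplies $s_k(\l) \le M e^{-c k^{1/d}}$ with $M:=|\l|_{c,1/d}$, reducing the task to an elementary one-parameter optimisation in $R$.

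The decisive choice is $R = M^{-1} e^{c n^{1/d}}$, so that the factor $RM\, e^{-c k^{1/d}}$ crosses $1$ exactly at $k=n$. I would split the resulting logarithm into a head sum over $k \le n$ (using $\log(1+x)\le\log(2x)$, valid for $x\ge 1$) and a tail sum over $k>n$ (using $\log(1+x)\le x$). The head contribution $\sum_{k=1}^n[\log 2 + c n^{1/d}-ck^{1/d}]$, combined with $-n\log R = -cn^{1+1/d}+n\log M$ and the Riemann-sum lower bound $\sum_{k=1}^n k^{1/d}\ge \int_0^n t^{1/d}\,dt = \frac{d}{d+1}\,n^{1+1/d}$, produces the principal exponent $n\log M + n\log 2 - \frac{d}{d+1}\,c\,n^{1+1/d}$. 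The tail contribution is handled by
\[
RM\int_n^\infty e^{-ct^{1/d}}\,dt = \frac{d!}{c^d}\sum_{i=0}^{d-1}\frac{(cn^{1/d})^i}{i!} = \sum_{j=1}^d\frac{d!}{(d-j)!}\,\frac{n^{1-j/d}}{c^j},
\]
obtained via the substitution $u=ct^{1/d}$ together with the standard identity $\int_x^\infty u^{d-1}e^{-u}\,du = (d-1)!\,e^{-x}\sum_{i=0}^{d-1} x^i/i!$ for integer $d$.

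Finally, absorbing the spurious $n\log 2 \le n$ into the missing $i=0$ summand $\frac{d!}{d!}\cdot n/c^0 = n$ completes the sum $\sum_{i=0}^d \frac{d!}{(d-i)!}\,\frac{n^{1-i/d}}{c^i}$ appearing in the theorem. I expect no serious obstacle: once the correct value of $R$ is identified, the whole argument is a bookkeeping calculation. The only delicate points are using the sharp majorisation of $|\det(I-\zeta\l)|$ by the \emph{full} singular value product, verifying that the incomplete gamma evaluation lands on exactly the stated polynomial in $n^{1/d}/c$, and handling the $\log 2$ against the $i=0$ term so no constants are lost.
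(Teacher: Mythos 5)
Your argument is correct and follows essentially the same route as the paper: both majorise $|\alpha_n(\l)|$ by the $n$-th coefficient (equivalently, via Cauchy on $|\zeta|=R$, the maximum modulus) of the singular-value product $\prod_k(1+R\,|\l|_{c,1/d}e^{-ck^{1/d}})$ and then optimise at $\log(R|\l|_{c,1/d})=cn^{1/d}$, arriving at the identical polynomial correction $\sum_{i=0}^d\frac{d!}{(d-i)!}n^{1-i/d}c^{-i}$ after absorbing $n\log 2$ into the $i=0$ term. The only difference is that where the paper quotes Fried's estimate $\log 1/\beta_n\ge n\log r-c^{-d}P(\log r)$ as a black box, you rederive the equivalent bound from scratch via the head/tail split and the incomplete gamma identity, which is a harmless (and self-contained) substitute.
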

\begin{proof}
  By \cite[Lemma 3.3]{simon},
  $$
  \alpha_n(\l)= \sum_{i_1<\ldots< i_n} \prod_{j=1}^n
  \lambda_{i_j}(\l)\,,
  $$
  the summation being over $n$-tuples of positive integers
  $(i_1,\ldots,i_n)$ with $i_1<\ldots<i_n$.  Now
  $$
  \sum_{i_1<\ldots <i_n} \left| \prod_{j=1}^n
    \lambda_{i_j}(\l)\right| \le \sum_{i_1<\ldots <i_n} \prod_{j=1}^n
  s_{i_j}(\l)\,,
  $$
  by \cite[Cor.~VI.2.6]{ggk}.  But $s_n(\l)\le |\l|_{c,1/d}
  \exp(-cn^{1/d})$ for all $n\in\N$, so
\begin{equation}
\label{hilbertsharpening:1}
 |\alpha_n(\l)|\leq |\l|_{c,1/d}^n \, \beta_n(c,d)\,,
\end{equation}
where $\beta_n=\beta_n(c,d)$
are the Taylor coefficients of the function
$f_{c,1/d}$ defined by
\[ f_{c,1/d}(\zeta)=\prod_{n=1}^\infty(1+\zeta\exp(-cn^{1/d}))
=\sum_{n=0}^\infty \beta_n(c,d)\zeta^n\,. \] Fried
\cite[p.~507]{fried} estimates
$
\log 1/\beta_n\ge n\log r - c^{-d} P(\log r)
$,
where $P(x):=\sum_{i=0}^{d+1}\frac{d!}{i!}x^i$.  Setting $\log r
=cn^{1/d}$ gives
\begin{equation*}
  \beta_n \le\exp\left(-cn^{1+1/d}+c^{-d}P(cn^{1/d})\right)
  =\exp\left(-\frac{d}{d+1}cn^{1+1/d}+\sum_{i=0}^d \frac{d!}{(d-i)!}
    \frac{n^{1-i/d}}{c^i}\right)\,,
\end{equation*}
and combining with (\ref{hilbertsharpening:1}) gives the required
bound on $\alpha_n(\l)$.
\end{proof}

One motivation for Theorem \ref{taylorcoeffbounds} is the possibility
of obtaining \emph{a posteriori} bounds on the eigenvalues of transfer
operators $\l:A^2(\Omega)\to A^2(\Omega)$.  In other words, for a
particular $\l$, we wish to rigorously bound the quality of
computed approximations to the eigenvalues $\lambda_i(\l)$.  In
particular cases these bounds may be sharper than the a priori
estimates of \S \ref{transferoperatorssection}.
In dimension $d=1$ such
rigorous a posteriori analysis has been performed in
\cite{jenkinsonpollicottbolyairenyi,jenkinsonpollicottjuliaklein}.
 The
bounds on $\alpha_n(\L)$ in Theorem \ref{taylorcoeffbounds} are
sharper than those of
\cite{jenkinsonpollicottbolyairenyi,jenkinsonpollicottjuliaklein}, and
valid for arbitrary $\Omega$ in arbitrary dimension $d$.

We now outline the method of a posteriori analysis based on Theorem
\ref{taylorcoeffbounds}.  Comparison of the two expressions
(\ref{expr1}) and (\ref{expr2}) for $\det(I-\zeta \l)$ yields the
identity
\begin{equation}
\label{alphaformula}
\alpha_n(\l)= \sum_{(n_1,\ldots,n_j)\atop n_1+\cdots+n_j=n}
\frac{(-1)^j}{j!}\prod_{l=1}^j a_{n_l}(\l)\,.
\end{equation}
In particular, each $\alpha_n(\l)$ is expressible in terms of
$a_1(\l),\ldots,a_n(\l)$.  The importance of this is underscored by
Ruelle's observation \cite{ruelleinventiones} that each $a_i(\l)$ can
itself be expressed in terms of fixed points (which are numerically
computable) of compositions of the maps $(\phi_i)_{i\in\I}$.
More precisely, if $\underline i :=(i_1,\ldots,i_n)\in\I^n$ then
$\phi_{\underline
  i}:=\phi_{i_n}\circ\cdots\circ\phi_{i_1}$ has a unique fixed point
$z_{\underline i}$ \cite[Lem.~1]{ruelleinventiones}.  If
$w_{\underline i}:= \prod_{j=0}^{n-1} w(z_{\sigma^j\underline i})$,
where $\sigma^j\underline i:=(i_{j+1},\ldots i_n ,i_1,\ldots, i_j)$,
Ruelle's formula is
\begin{equation}
\label{aformula}
a_n(\l)=\frac{1}{n} \text{tr}(\l^n)
= \frac{1}{n} \sum_{\underline i\in \I^n}
\frac{w_{\underline i}}{\det(I-\phi_{\underline i}'(z_{\underline i}))}\,,
\end{equation}
where $\phi_{\underline i}'$ denotes the derivative of
$\phi_{\underline i}$.

Now fix $N\in\N$ such that for all $\underline i\in \cup_{1\le n\le N}
\I^n$, the fixed point $z_{\underline i}$ can be determined
computationally
to a given numerical precision.  The Taylor coefficients
$\alpha_1(\l),\ldots,\alpha_N(\l)$ may then be computed via
(\ref{alphaformula}), (\ref{aformula}), and used to define the
polynomial function $\Delta_N(\zeta):=1+\sum_{n=1}^N
\alpha_n(\l)\zeta^n$, an approximation to
$\Delta(\zeta):=\det(I-\zeta\l)$.  If $\zeta_1, \zeta_2,\ldots$
are the zeros of $\Delta$, ordered by increasing modulus and listed
with multiplicity, then
each
$\zeta_i=\lambda_i(\l)^{-1}$.  Let $\zeta_{N,1},\ldots, \zeta_{N,N}$
denote the zeros of $\Delta_{N}$, ordered by increasing modulus and
listed with multiplicity; these zeros can be computed to a given
precision, and their reciprocals will approximate the corresponding
eigenvalues of $\l$.  In this way any eigenvalue $\lambda_i(\l)$ may
be approximated by the numerically computable values
$\zeta_{N,i}^{-1}$.
A practical issue concerns the quality of this approximation,
and it is here that the a priori bounds on the $\alpha_n(\l)$ can be
used.  The error $|\zeta_i-\zeta_{N,i}|$ may be bounded
using Rouch\'e's theorem: if $C$ is a
circle of radius $\varepsilon>0$, centred at $\zeta_{N,i}$ and
enclosing no other zero of $\Delta_{N}$, and  if it can be shown that
\begin{equation}
\label{rouche}
|\Delta_N(\zeta)-\Delta(\zeta)|<|\Delta_N(\zeta)|
\quad\text{for }\zeta\in C\,,
\end{equation}
then $\zeta_{i}$  lies inside $C$, so
$|\zeta_i-\zeta_{N,i}|<\varepsilon$.  As
$
\Delta(\zeta)-\Delta_N(\zeta) = \sum_{n=N+1}^\infty \alpha_n(\l)
\zeta^n
$,
the lefthand side of (\ref{rouche}) can be estimated in terms of
$\alpha_n(\l)$, $n>N$, which are bounded by Theorem
\ref{taylorcoeffbounds}.

\section{Appendix A: A proof of Grothendieck's Remarque 9}
\label{grothendiecksection}

In his thesis
\cite{grothendieck}, Grothendieck proved that the
eigenvalues of a bounded operator on a
quasi-complete nuclear space decrease {\it rapidly} \cite[Chap II,
\S 2, No. 4, Corollaire 3]{grothendieck}. He also noted
that this result could be improved
for certain spaces: in \cite[Chap II, \S 2, No.4, Remarque 9]{grothendieck} he
provides a sketch of a proof that shows that the eigenvalue sequence
$\lambda(L)$ of any bounded operator
$L$ on ${\mathcal  H}(\Omega)$, $\Omega\in\open$,
satisfies\footnote{Grothendieck in fact asserted that
$\lam(L)\in {\mathcal  E}(1)$, though his arguments can be modified so
as to yield $\lambda(L) \in {\mathcal  E}(1/d)$.}
$\lambda(L) \in {\mathcal  E}(1/d)$.

The results of this paper
allow us
to give a short alternative proof of Grothendieck's Remarque 9.
Let
$\set{\Omega_n}_{n\in\N}$ be a collection of members of $\open$
such that
$\Omega_n\cc\Omega_{n+1}$  for $n\in\N$,
and $\cup_{n\in\N}\Omega_n=\Omega$.
For $n\in\N$, define the seminorm
 $p_n$ on ${\mathcal  H}(\Omega)$ by
$
 p_n(f):=\sqrt{ \int_{\Omega_n}\abs{f(z)}^2\,dV(z)}
$
(note that
$p_n$ gives the norm on $A^2(\Omega_n)$).
Then $\{p_n\}$ forms a directed system of seminorms which
turns
${\mathcal  H}(\Omega)$ into a Fr\'echet space and which, by
Lemma~\ref{berg:lem1}, coincides with the usual topology of uniform convergence
on compact subsets of $\Omega$.
Moreover, since each
identification
$A^2(\Omega_{n+1})\rightarrow A^2(\Omega_n)$ is nuclear by
Theorem~\ref{embeddingtheorem}, the space ${\mathcal  H}(\Omega)$ is nuclear.

Recall that a subset $S$ of a topological vector space $E$ is {\it
bounded} if for each neighbourhood $U$ of $0$, we have $S \subset
\alpha U$ for some $\alpha > 0$. A linear operator $L:E\to E$ is
{\it bounded} if it takes a neighbourhood of
  0 into a bounded set.
We are now able to prove the following.

\begin{theorem}\label{grothendiecktheorem}[Grothendieck]
Suppose $\Omega\in \open$, and
$L:{\mathcal  H}(\Omega)\to {\mathcal  H}(\Omega)$
is a bounded linear operator. Then:
\begin{itemize}
\item[(i)]
There exists a sequence $\set{s_k}$ of positive numbers
belonging to ${\mathcal  E}(1/d)$,
an equicontinuous sequence $\set{f_k'}$ in
the topological dual ${\mathcal H}(\Omega)'$
of ${\mathcal  H}(\Omega)$, and a bounded sequence
$\set{f_k}$ in ${\mathcal  H}(\Omega)$, such that
$L$ can be written
\[ Lf=\sum_{k} s_k\dual{f}{f_k'}f_k \quad
\text{for all $f\in {\mathcal  H}(\Omega)$}\,.\]
Here, $\dual{f}{f'}$ denotes the evaluation of $f'\in {\mathcal
  H}(\Omega)'$ at $f$.
\item[(ii)] $\lam(L)\in {\mathcal  E}(1/d)$.
\end{itemize}
\end{theorem}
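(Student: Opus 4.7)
The plan is to exhibit a three-fold factorisation $L = B\circ J\circ A$ in which the middle operator $J$ is a canonical Bergman-space identification whose singular values, by Theorem~\ref{embeddingtheorem}, already decay at rate $\mathcal E(1/d)$. A Schmidt decomposition of $J$ will then lift to the desired representation of $L$, and the eigenvalue bound in (ii) will follow from the standard cyclic-permutation identity for non-zero spectra.

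First I would use the boundedness of $L$ to choose $N\in\N$ and constants $C_n$ with $p_n(Lf)\le C_n\,p_N(f)$ for every $n\in\N$ and every $f\in\H(\Omega)$. Since a $p_N$-Cauchy sequence in $\H(\Omega)$ is $p_n$-Cauchy under $L$ for every $n$ and $\H(\Omega)$ is complete, the assignment $\iota_N(f)\mapsto Lf$ extends by continuity to an operator $\hat L:H_N\to\H(\Omega)$, where $H_N$ denotes the closure of $\iota_N(\H(\Omega))$ in $A^2(\Omega_N)$; by construction $\hat L$ sends bounded sets to bounded sets. Extending by zero on the orthogonal complement of $H_N$ yields a (TVS-)bounded operator $B:A^2(\Omega_N)\to\H(\Omega)$. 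Choosing any $n_2>N$ and setting $A:=\iota_{n_2}:\H(\Omega)\to A^2(\Omega_{n_2})$ and $J:=J_{n_2,N}:A^2(\Omega_{n_2})\to A^2(\Omega_N)$, the identity $\iota_N=J\circ A$ yields $L=B\circ J\circ A$.

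By Theorem~\ref{embeddingtheorem}, $J\in E(c,1/d)$ for some $c>0$, so its Schmidt decomposition is $J=\sum_k s_k\,\dual{\cdot}{u_k}\,v_k$ with $\{u_k\}$ an ONB of $A^2(\Omega_{n_2})$, $\{v_k\}$ an ONS in $A^2(\Omega_N)$, and $\{s_k\}\in\mathcal E(c,1/d)$. Substituting into $L=BJA$ gives
\[
Lf=\sum_k s_k\,\dual{f}{f_k'}\,f_k,\qquad f_k':=\iota_{n_2}^{*}(u_k),\ f_k:=B(v_k).
\]
Equicontinuity of $\{f_k'\}$ follows from $|\dual{f}{f_k'}|=|\dual{\iota_{n_2}f}{u_k}|\le p_{n_2}(f)$; boundedness of $\{f_k\}$ in $\H(\Omega)$ follows because $B$ sends the bounded ONS $\{v_k\}\subset A^2(\Omega_N)$ to a bounded subset of $\H(\Omega)$; and convergence of the series in the Fr\'echet topology is guaranteed by $\mathcal E(1/d)\subset\ell^1$. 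This establishes (i).

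For (ii) I would invoke the cyclic-permutation identity $\lambda(BJA)\setminus\{0\}=\lambda(JAB)\setminus\{0\}$, where $JAB:A^2(\Omega_N)\to A^2(\Omega_N)$. Since $AB=\iota_{n_2}\circ B$ is a bounded Hilbert-space operator and $J\in E(c,1/d)$, Lemma~\ref{expoprop2}(i) gives $JAB\in E(c,1/d)$, and then Lemma~\ref{prop2} with $\alpha=1/d$ yields $\lambda(JAB)\in\mathcal E(dc/(d+1),1/d)$; hence $\lambda(L)\in\mathcal E(1/d)$. The principal obstacle is the initial extension step, namely verifying that $B$ is genuinely a TVS-bounded operator from $A^2(\Omega_N)$ into $\H(\Omega)$ (so that the factorisation $L=BJA$ is meaningful); once this is in place, the remaining steps reduce to a Schmidt decomposition of $J$ and the closure properties of exponential classes recorded in \S\ref{prelimiariessection}.
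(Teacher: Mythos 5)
Your proposal is correct and follows essentially the same route as the paper: factorise $L$ through a canonical Bergman-space identification lying in $E(c,1/d)$ by Theorem~\ref{embeddingtheorem} (your $B$ is the paper's $\widetilde L P$, your $A,J$ its $J_1,J_2$), use the Schmidt decomposition of that identification for (i), and cyclically permute the factors — the paper cites Pietsch's principle of related operators for this step — before applying Lemma~\ref{expoprop2}(i) and Lemma~\ref{prop2} for (ii). The only cosmetic difference is the order of the two restriction maps in the factorisation; no substantive gap.
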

\begin{proof} The two assertions will follow from a factorisation of
  $L$, which we shall first derive.
  Since $L$ is bounded, there exists $n_0\in\N$ such that for every
  $n\in\N$, there is a constant $M_n$ satisfying
$p_n(L f)\leq M_np_{n_0}(f)$ for all
$f\in {\mathcal  H}(\Omega)$.
Fixing $m>n_0$,  let
$ J_1:{\mathcal  H}(\Omega)\rightarrow A^2(\Omega_m)$ and
 $J_2:A^2(\Omega_m)\rightarrow A^2(\Omega_{n_0})$
denote canonical identifications.
Clearly,
$J_1$ and $J_2$ are continuous. Let $\overline{J_2J_1{\mathcal
    H}(\Omega)}$ be the closure of $J_2J_1{\mathcal  H}(\Omega)$ in the
Hilbert space $A^2(\Omega_{n_0})$ and let $P: A^2(\Omega_{n_0})
\rightarrow \overline{J_2J_1{\mathcal  H}(\Omega)}$ be the corresponding
orthogonal projection. Then the linear map
$f \in
J_2J_1{\mathcal  H}(\Omega) \mapsto Lf \in {\mathcal  H}(\Omega)
 $
is well-defined and bounded, and therefore extends to a bounded linear
map
$\widetilde L : \overline{J_2J_1{\mathcal  H}(\Omega)} \to {\mathcal  H}(\Omega)$.
The operator $L$ therefore admits the factorisation
\begin{equation}
\label{boundedfactorisation}
L=\widetilde{L}PJ_2J_1\,.
\end{equation}
To prove (i), note that $J_2\in E(1/d)$ by
Theorem~\ref{embeddingtheorem}, so we have the Schmidt representation
$J_2f=\sum_ks_k(J_2)\inn{f}{a_k}_mb_k$,
where $\set{a_k}$ and $\set{b_k}$ are orthonormal
systems in $A^2(\Omega_m)$ and $A^2(\Omega_{n_0})$ respectively and
$\inn{\cdot}{\cdot}_m$ denotes the inner product in $A^2(\Omega_m)$.
Since $\widetilde{L}P$ is continuous,
\begin{equation*}
Lf = \widetilde{L}PJ_2J_1f = \sum_k
s_k(J_2)\inn{J_1f}{a_k}_m\widetilde{L}Pb_k\,,
\end{equation*}
which can be written as
\begin{equation}
\label{representation:e1}
Lf = \sum_k s_k(J_2)\dual{f}{J_1'a_k'}\widetilde{L}Pb_k\,,
\end{equation}
where $J_1'$ denotes the adjoint of $J_1$, and $a_k'$ the image of
$a_k$ under the canonical isomorphism of $A^2(\Omega_m)$ and its dual.
In order to see that the representation~(\ref{representation:e1}) has
the desired properties, we note that $\set{\widetilde{L}Pb_k}$ is
bounded, since it is the continuous image of a bounded set.
Furthermore, $\set{J_1'a_k'}$ is equicontinuous in the dual of ${\mathcal
  H}(\Omega)$, since
\[ \abs{\dual{f}{J_1'a_k'}}=\abs{\inn{J_1f}{a_k}_m}\leq
p_m(J_1f)p_m(a_k)\leq p_m(f)\,.\] Therefore (i) is proved.

To prove (ii) we again use the
factorisation~(\ref{boundedfactorisation}). By Pietsch's principle of
related operators (see \cite[Satz 2]{pietschfredholm}),
$\lam(L)
=  \lam(\widetilde{L}PJ_2J_1)=\lam(J_1\widetilde{L}P J_2)$.
But $J_1\widetilde{L}P :A^2(\Omega_{n_0})\to A^2(\Omega_m)$ is a
bounded operator between Hilbert spaces, and $J_2\in E(1/d)$ by
Theorem~\ref{embeddingtheorem}, so
$J_1\widetilde{L}PJ_2\in E(1/d)$ by Lemma~\ref{expoprop2},
hence $\lam(J_1\widetilde{L}PJ_2)\in {\mathcal  E}(1/d)$
by Lemma~\ref{prop2}, and (ii) follows.
\end{proof}

\begin{rem}
  In our approach, assertion (ii) of Theorem \ref{grothendiecktheorem}
  follows by combining Theorem \ref{embeddingtheorem} with Weyl's
  multiplicative inequality, whereas Grothendieck suggests to derive
  (ii) from (i) by considering the growth of the determinant
  $\det(I-\zeta L)$ at infinity and using Jensen's theorem to
  determine bounds on the distribution of its zeros.  A more detailed
  analysis of this circle of ideas will be presented in the following
  \S \ref{determinantbasedsection}.
\end{rem}

\section{Appendix B: Eigenvalue estimates via the determinant}
\label{determinantbasedsection}

Given a transfer operator $\l$ associated to a holomorphic map-weight
system on $\Omega\in\open$, we have shown
(Theorem~\ref{mainthm}) how to find explicit constants
$a,A>0$ such that
\begin{equation}
\label{notinfried}
s_n(\l)\le
A\exp(-an^{1/d})
\quad\text{for all }n\in\N
\,,
\end{equation}
and used this (Theorem~\ref{expliciteigenvaluebounds}) to
find explicit $b,B>0$ for which
\begin{equation}
\label{evaluesapp2}
|\lambda_n(\l)|\le B\exp(-bn^{1/d})
\quad\text{for all }n\in\N
\,.
\end{equation}
The purpose of this appendix is to outline an alternative, less
direct, method of obtaining eigenvalue bounds analogous to
(\ref{evaluesapp2}), again starting from the singular value estimate
(\ref{notinfried}).  This approach is based on an analysis of the
growth of the determinant $\det(I-\zeta\l)$, and was originally
suggested by Grothendieck in \cite[Chap. II, \S2, No. 4, Remarque
9]{grothendieck}. Further details of this strategy were given by Fried
\cite{fried}, and we shall offer some commentary on Fried's analysis,
in particular his Lemma 6, adapted slightly to our Hilbert space
setting.

A bound of the type (\ref{notinfried}) is not proved in \cite{fried},
though does appear to be tacitly assumed \cite[p.~506, line 8]{fried},
on the basis of a suggested correction of \cite[II, Remarque 9,
p.~62--4]{grothendieck} (see \cite[p.~506, line 3]{fried}, and our
comments in Sections \ref{introsection} and
\ref{grothendiecksection}).
With the singular value estimate (\ref{notinfried}) in hand, it is
possible to analyse the growth properties of
the
function
$\zeta\mapsto\det(I-\zeta\l)$, which
is entire because $\l$ is trace class
   (see \S \ref{determinantsection}).
This is the content of \cite[Lemma
6]{fried}, which we now review, incorporating some refinements
available in the Hilbert space setting.  We start by writing
\[ \det(I-\zeta\l)=\sum_{n=0}^\infty \alpha_n(\l) \zeta^n \,. \]
As in Theorem \ref{taylorcoeffbounds} we use the formula
$$
\alpha_n(\l)= \sum_{i_1<\ldots< i_n} \prod_{j=1}^n
\lambda_{i_j}(\l)\,,
$$
and the inequality
$$
\sum_{i_1<\ldots <i_n} \left| \prod_{j=1}^n
  \lambda_{i_j}(\l)\right| \le \sum_{i_1<\ldots <i_n} \prod_{j=1}^n
s_{i_j}(\l)\,,
$$
to deduce that
\begin{equation}
\label{hilbertsharpening}
 |\alpha_n(\l)|\leq A^n\beta_n(a,d)\,,
\end{equation}
where $\beta_n(a,d)$ are the Taylor coefficients of the function
$f_{a,1/d}$ defined by
\[ f_{a,1/d}(\zeta)=\prod_{n=1}^\infty(1+\zeta\exp(-an^{1/d}))
=\sum_{n=0}^\infty \beta_n(a,d)\zeta^n\,. \] Note that
(\ref{hilbertsharpening}) is sharper than the corresponding estimate
in \cite[p.~506]{fried}, which contains an extra factor $n^{n/2}$.
Following Fried, the coefficients $\beta_n=\beta_n(a,d)$ can be
estimated, using Cauchy's theorem, by $\beta_n\leq r^{-n}M(r)$, where
$M(r)$ is the maximum modulus of $f_{a,1/d}(\zeta)$ on $|\zeta|=r$.
Using either the asymptotics
\[ \log f_{a,1/d}(r)\sim a^{-d} \frac{1}{d+1}(\log r)^{1+1/d}
\quad\text{as }r\to\infty
 \]
 in
\cite[Proof of Proposition~3.1~(i)]{expoclass}, or Fried's calculation that
 $
 \log 1/\beta_n\ge n\log r - a^{-d}P(\log r)\,,
 $
 where $P(x):=\sum_{j=0}^{d+1}\frac{d!}{j!}x^j$, we see that for
 any $\delta_0>1$,
\[ \log 1/\beta_n \geq n\log r - \delta_0 a^{-d} \frac{1}{d+1}(\log
r)^{1+1/d}, \] for $r$ sufficiently large.  Choosing $\log r=an^{1/d}$
gives
$\log 1/\beta_n \geq \delta_1 a \frac{d}{d+1}n^{1+1/d}$
for $n$ sufficiently large, where $\delta_1=1-(\delta_0-1)/d$.
Therefore there exists $K>0$, depending on $\delta_1$, such that
\[ |\alpha_n(\l)| \leq K A^n \exp\left( -\delta_1 a
  \frac{d}{d+1}n^{1+1/d} \right)\quad
\text{for all }n\in\N\,.  \]
Thus
if $
g(r):= \sum_{n=1}^\infty r^n \exp\left( -\delta_1 a
  \frac{d}{d+1}n^{1+1/d} \right)
$ then
\[  |\det(I-\zeta\l)|\leq 1+ K\sum_{n=1}^\infty |\zeta|^n A^n
\exp\left( -\delta_1 a \frac{d}{d+1}n^{1+1/d} \right)=1+K
g(A|\zeta|)\,. \]

To estimate the growth of $g$,
define\footnote{Alternatively one could proceed as in \cite{fried}, but
  the method there is a little less sharp.}
$\mu(r):=\max_{1\leq n\leq \infty}  r^n \exp\left( -\delta_1 a
  \frac{d}{d+1}n^{1+1/d}\right)$.
This maximal term can be calculated explicitly using calculus (see
\cite[Proof of Proposition~3.1~(ii)]{expoclass}), and we obtain
\[ \log \mu(r)\sim (\delta_1a)^{-d}\frac{1}{d+1} (\log r)^{1+d}
\quad\text{as }r\to\infty \,. \]

But $g$ is an entire function of finite order, so
$\log \mu(r)\sim \log g(r)$ as $r\to\infty$
(see e.g.~\cite[Problem 54]{polyaszego}),
hence
$\log g(r)\sim (\delta_1a)^{-d}\frac{1}{d+1} (\log r)^{1+d}$
as $r\to\infty$.
Therefore, for $|\zeta|$ sufficiently large and
$\delta_2\geq \delta_1^{-d}$,
\begin{equation}
\label{logdetgrowthestimate}
 \log |\det (1-\zeta\l)| \leq \delta_2 a^{-d}
\frac{1}{d+1}(\log |\zeta|A)
^{1+d} \,.
\end{equation}

The bound (\ref{logdetgrowthestimate}) allows us to
estimate the speed with which the zeros of $\det(1-\zeta\l)$
tend to infinity.  Specifically, if $n(r)$ denotes the number of
zeros of $\det(1-\zeta\l)$ in the disk of radius $r$ centred at $0$,
and $N(r):=\int_0^rt^{-1}n(t)\,dt$, Jensen's theorem (see
e.g.~\cite[p.~2]{boasbook})
gives
\begin{equation}
\label{jensenconsequence}
 N(r)\leq \delta_2 a^{-d}\frac{1}{d+1}(\log rA)
^{1+d}
\end{equation}
for $r$ sufficiently large.
We now require the following lemma:

\begin{lem}
\label{appendixblemma}
If $N(r)\leq K(\log r)^{1+d}$ for some positive real number $d$, then
\[ n(r)\leq K \frac{(1+d)^{1+d}}{d^d}(\log r)^d. \]
\end{lem}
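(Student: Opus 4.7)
The plan is to exploit the definition $N(r)=\int_0^r t^{-1} n(t)\,dt$ together with monotonicity of $n$. Since $n$ is non-decreasing, for every $\rho>r>1$ one has
\[
N(\rho)\ \ge\ \int_r^\rho t^{-1} n(t)\,dt\ \ge\ n(r)\int_r^\rho t^{-1}\,dt\ =\ n(r)\log(\rho/r).
\]
Combining this with the hypothesis $N(\rho)\le K(\log\rho)^{1+d}$ gives, for every $\rho>r$,
\[
n(r)\ \le\ \frac{K(\log \rho)^{1+d}}{\log(\rho/r)}.
\]

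The second step is to optimise this bound over the free parameter $\rho$. Parametrising $\rho=r^c$ with $c>1$, one has $\log\rho=c\log r$ and $\log(\rho/r)=(c-1)\log r$, so the right-hand side becomes
\[
K\,(\log r)^d\cdot\frac{c^{1+d}}{c-1}.
\]
A routine calculus check (differentiating $c^{1+d}/(c-1)$ and setting the numerator to zero) shows that the minimum over $c>1$ is attained at $c=(1+d)/d$, giving the value $(1+d)^{1+d}/d^d$. Substituting this choice back yields the stated inequality.

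I expect no real obstacle: the only mild point is ensuring that $r$ is large enough that the choice $\rho=r^{(1+d)/d}$ lies in the regime where the hypothesis $N(\rho)\le K(\log\rho)^{1+d}$ is being applied, but since the lemma's hypothesis is stated as a clean inequality this is automatic. For small $r$ the bound on $n(r)$ is vacuous (or absorbed in the asymptotic statement used in the surrounding argument, namely \eqref{jensenconsequence}), so the optimisation above delivers the claim in the regime that matters.
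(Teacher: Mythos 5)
Your proposal is correct and follows essentially the same route as the paper: both compare $n(r)\log(\rho/r)$ with $N(\rho)$ for $\rho=r^p$ and then pick $p=1+1/d=(1+d)/d$, which your calculus check confirms is the optimal choice. No gaps.
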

\begin{proof}
  If $p>1$ then
$(p-1) n(r)\log r=n(r)\int_r^{r^p}t^{-1}\,dt\leq
\int_r^{r^p}t^{-1}n(t)\,dt\leq N(r^p)$, so
$$
n(r)\leq \frac{ N(r^p)}{(p-1)\log r}\leq \frac{ K p^{1+d}(\log
  r)^{1+d}}{(p-1)\log r}\,.
$$
The assertion follows by choosing
$p=1+1/d$.
\end{proof}

Combining (\ref{jensenconsequence}) and Lemma \ref{appendixblemma}
gives
$n(r)\leq  \delta_3 a^{-d}\left (\frac{1+d}{d}\right )^d(\log rA)^d$
for $r$ sufficiently large.  But the zeros of $\det(I-\zeta\l)$ are
precisely the numbers
$\lambda_1(\l)^{-1}$,
$\lambda_2(\l)^{-1},\ldots$,
ordered by modulus, so for $n$
sufficiently large,
$n\leq \delta_3a^{-d} \left(\frac{1+d}{d}\right)^d(\log
A|\lambda_n(\l)|^{-1})^d$,
and finally we deduce the required
eigenvalue bound
\begin{equation}
\label{alternativeevaluebound}
 |\lambda_n(\l)|\leq A \exp\left( -\delta_3^{-1/d} a
\frac{d}{1+d}n^{1/d}\right)\quad\text{for }n\text{ sufficiently large.}
\end{equation}

Since $\delta_3$ can be chosen arbitrarily close to 1,
(\ref{alternativeevaluebound}) can be made arbitrarily close to the
bound of Lemma \ref{prop2}.  Note, however, that
(\ref{alternativeevaluebound}) only holds for $n\ge N$, for some
unknown $N$, whereas the bound of Theorem
\ref{expliciteigenvaluebounds} is valid for all $n\in\N$.

\end{document}